\documentclass{ws-jaa-ANNA}

% Make sure that you include the following two packages.
%\usepackage{yjsco}
%\usepackage{natbib}

\usepackage[urlcolor=blue]{hyperref} % for clickable toc and references

\usepackage{multirow} % for table multirow
\usepackage{enumitem} % for parsep, itemsep...
\usepackage{xspace} % for \xspace
\usepackage{listings} % for writing cocoa code (instead of verbatim)

%\usepackage{tikz}
     %\usepackage{graphicx}   % Add graphics capabilities
%\usepackage{amsmath,amssymb}
     % Better maths support & $more symbols
%\usepackage{amsthm}
\usepackage{bm}
     % Define \bm{} to use bold math fontst
\usepackage{pdfsync}
     % enable tex source and pdf output synchronicity
\usepackage{subfigure}
\usepackage[dvipsnames]{xcolor}
\usepackage[english]{babel}

\usepackage[T1]{fontenc}
%\usepackage{yfonts}

  % items as a b ..
  % items as (a) (b) ..

%\theoremstyle{plain}
%\newtheorem{theorem}{Theorem}[section]
%\newtheorem{proposition}[theorem]{Proposition}
%\newtheorem{lemma}[theorem]{Lemma}
%\newtheorem{corollary}[theorem]{Corollary}
%\newtheorem{assumption}[theorem]{Assumptions}
%
%\theoremstyle{definition}
%\newtheorem{example}[theorem]{Example}
%\newtheorem{definition}[theorem]{Definition}
%\newtheorem{remark}[theorem]{Remark}
%\newtheorem{algorithm}[theorem]{Algorithm}
%\newtheorem{notation}[theorem]{Notation}

%-- LISTINGS --------------------------------------------------
\lstdefinelanguage{cocoa}
{
  % basicstyle=\small\ttfamily,
  commentstyle=\color{red!90!black},       % comment style
  stringstyle=\color{green!70!black},      % string literal style
  % sensitive=false,
  morecomment=[l]{//},
  morecomment=[l]{--},
  morecomment=[s]{/*}{*/},
  morestring=[b]{"},
  classoffset=1,
  morekeywords={
    define,enddefine,if,endif,for,endfor,
    use,in,then,else,elif,return,
    and,or,
    break,continue,ciao,do,exit,
    ImportByValue,ImportByRef,importbyvalue,importbyref,
    in,isin,IsIn,
    on,opt,PrintLn,println,print,
    quit,ref,return,step,
    toplevel,TopLevel,
    then,to,
    use
  },
  keywordstyle=\color{blue!70!green!80!white},
  classoffset=2,
  morekeywords={
    Ideal,Mat,       Not,Record,Error,
    ideal,mat,matrix,not,record,error,submodule
  },
  keywordstyle=\color{purple!50!white},
  classoffset=3,
  morekeywords={
    TRUE,FALSE,True,False,true,false,
    Lex,Xel,DegLex,DegRevLex,
    PosTo,ToPos,Null
  },
  keywordstyle=\color{brown},
}

% ---- LISTINGS end ------------------------------------------------
\lstset{ %
   basicstyle=\ttfamily\upshape\small,
   keepspaces=true,    % keeps spaces in text, useful for keeping
%   framexleftmargin=4pt,
%   basewidth={0.45em,0.45em},
%   breaklines=true,       % sets automatic line breaking
%   captionpos=no,        % sets the caption-position
%                %indentation of code (possibly needs columns=flexible)
%   columns=flexible,
%   showstringspaces=false,  % underline spaces within strings only
%   tabsize=2,                        % sets default tabsize to 2 spaces
 }
%%%%%%%%% LISTINGS end %%%%%%%%%%%%%%%%%%%

%%strings

\newcommand{\ideal}[1]{\langle #1 \rangle}
\newcommand{\define}[1]{\textbf{\boldmath #1}}
\newcommand{\ext}{\;}

\newcommand{\BOX}[1]{%
  {% open a group for a local setting
   \setlength{\fboxsep}{-0\fboxrule}% the rule will be inside the box boundary
   \fbox{\hspace{1.2pt}\strut{$#1$}\hspace{1.2pt}}% print the box, with some
   %padding at the left and right
   _{\mathstrut}
  }% close the group
}

%%%%%%%%%%%%%%%%%%%%%%%%%%%%%%%%%%%%
%                FONTS SWITCHES
%%%%%%%%%%%%%%%%%%%%%%%%%%%%%%%%%%%%

\let\phi=\varphi
\let\rho=\varrho
\let\theta=\vartheta
\let\epsilon=\varepsilon

%%%%%%%%%%%%%%%%%%%%%%%%%%%%%
%   Abbreviations
%%%%%%%%%%%%%%%%%%%%%%%%%%%%%

\def\LT{\mathop{\rm LT}\nolimits}
\def\MinLT{\mathop{\rm MinLT}\nolimits}
\def\LC{\mathop{\rm LC}\nolimits}
\def\LM{\mathop{\rm LM}\nolimits}
\def\NF{\mathop{\rm NF}\nolimits}

\def\den{\mathop{\rm den}\nolimits}

\def\lcm{\mathop{\rm lcm}\nolimits}

\def\notdiv{\!\mathbin{\not|}}
\def\divides{\mathbin{|}}
\def\SuchThat{\,\mathbin{|}\,}

\newcommand\degrevlex{\mathtt{DegRevLex}}
\newcommand\lex{\mathtt{Lex}}

\newcommand\Os{\mathrm{OrdMinLT}_\sigma}
\newcommand\Ot{\mathrm{OrdMinLT}_\tau}

\newcommand \ie {\textit{i.e.}}
\newcommand \eg {\textit{e.g.}}

\newcommand \rad{\mathop{\rm rad}\nolimits}

\newcommand \prim{\mathop{\rm prim}\nolimits}
\newcommand \FF {{\mathbb F}}
\newcommand \NN {{\mathbb N}}
\newcommand \QQ {{\mathbb Q}}

\newcommand \TT {{\mathbb T}}
\newcommand \ZZ {{\mathbb Z}}

\newcommand \tmin {{t'_{\min}}}
\newcommand \To{\longrightarrow}
\newcommand \TTo[1]{\mathop{\longrightarrow}\limits^{#1}}

\newcommand \FZ {{F_\ZZ}}

\newcommand \Gsigma {{G_{\sigma}}}
\newcommand \GsigmaZPrime {{G'_{\sigma,\ZZ}}}
\newcommand \Gtau {{G_{\tau}}}
\newcommand \GsigmaZ {{G_{\sigma,\ZZ}}}

\newcommand \ZZxn {\mathop{\ZZ[x_1, \dots, x_n]}}
\newcommand \QQxn {\mathop{\QQ[x_1, \dots, x_n]}}

\newcommand \longiso{\,\smash{\TTo{\lower 7pt\hbox{$\scriptstyle\sim$}}}\,}

\newcommand\groebner{Gr\"obner\xspace}
\newcommand\gbasis{Gr\"obner basis\xspace}
\newcommand\gbases{Gr\"obner bases\xspace}

\newcommand \cocoa{\mbox{\rm
C\kern-.13em o\kern-.07 em C\kern-.13em o\kern-.15em A}}
\newcommand \apcocoa{\mbox{\rm
A\kern-0.13em p\kern -0.07em C\kern-.13em o\kern-.07 em C\kern-.13em
o\kern-.15em A}}

%%%%%%%%%%%%%%%%%%%%%%%%%%%%%%%%%%%%%
%%%%%%%%%%%%%%%%%%%%%%%%%%%%%%%%%%%%%

\begin{document}

%\begin{frontmatter}
\markboth{J. Abbott, A.M. Bigatti, L. Robbiano}
{Ideals Modulo a Prime}

\catchline{}{}{}{}{}

\title{Ideals Modulo a Prime}

\author{\footnotesize John Abbott}

\address{ Dip. di Matematica,
\ Universit\`a degli Studi di Genova, \\ 
Via Dodecaneso 35,\
I-16146\ Genova, Italy\\
\email{abbott@dima.unige.it}}

\author{\footnotesize Anna Maria Bigatti}

\address{Dip. di Matematica,
\ Universit\`a degli Studi di Genova, \\ Via
Dodecaneso 35,\
I-16146\ Genova, Italy\\
\email{bigatti@dima.unige.it}}

\author{\footnotesize Lorenzo Robbiano}

\address{Dip. di Matematica,
\ Universit\`a degli Studi di Genova, \\ Via
Dodecaneso 35,\
I-16146\ Genova, Italy\\
\email{robbiano@dima.unige.it}}

\maketitle

% \begin{history}
% \received{(Day Month Year)}
% \revised{(Day Month Year)}
% \accepted{(Day Month Year)}
% \comby{(xxxxxxxxx)}
% \end{history}

%\thanks{This research was partly supported by the ``National Group for
%Algebraic and Geometric Structures, and their Applications'' (GNSAGA --
%INDAM).  J.~Abbott is an INdAM--COFUND Marie Curie Fellow.}

\begin{abstract}
  The main focus of this paper is on the problem of relating an ideal
  $I$ in the polynomial ring $\QQ[x_1, \dots, x_n]$ to a corresponding
  ideal in $\mathbb F_p[x_1,\dots, x_n]$ where $p$ is a prime number;
  in other words, the \textit{reduction modulo $p$} of~$I$.  We 
  first define a new notion of $\sigma$-good prime for~$I$ which does depends
on the term ordering $\sigma$, 
but not on the given generators of~$I$.
%\cancel{
%and show that all but finitely many primes  are good for all term orderings.  
%}
We relate our notion of
  $\sigma$-good primes to some other similar notions already in the
  literature.  Then
we introduce and describe a new invariant called the universal denominator
which frees our definition of reduction modulo~$p$ from the term
ordering,
thus letting us show that all but finitely many primes  are good for~$I$.
One characteristic of our approach is that it enables us to easily detect
some bad primes, a distinct advantage when using modular methods.
\end{abstract}

\keywords{Ideals; Modular; Groebner Bases; Term Orderings.}
\ccode{2010 Mathematics Subject Classification:
{13P25, 13P10, 13-04, 14Q10, 68W30}
}

\date{\today}

%\end{frontmatter}

%\tableofcontents

%%%%%%%%%%%%%%%%%%%%%%%%%%%%%%%%
\section{Introduction and Notation}
\label{sec:Introduction and Notation}
%%%%%%%%%%%%%%%%%%%%%%%%%%%%%%%%

There is a long tradition of using modular techniques for speeding up
computations which involve polynomials with rational coefficients.
Consequently, it is practically impossible to quote all the papers 
related to this topic; a few of them are~\cite{AoyNor},  \cite{IPS}, \cite{M04}, \cite{Nor2002},  \cite{NY[17]}, \cite{NY[18]}, \cite{NY2018},  \cite{Pa}, and~\cite{Wi}.
Two main interrelated obstacles to the success of this kind of approach are the
existence of \textit{bad, good and lucky primes}  and the difficulty of reconstructing the
\textit{correct rational coefficients} possibly in the presence of undetected bad primes.  We refer to~\cite{Abb2015} for a discussion
of the second problem and to~\cite{ABR} and~\cite{ABPR} for some new
results in this 
direction and applications to the problem of the implicitization of
hypersurfaces and of the computation of minimal polynomials.

The main focus of this paper is on the problem of relating an ideal $I$
in the polynomial ring $P=\QQ[x_1, \dots, x_n]$
to a corresponding ideal in $\mathbb F_p[x_1,\dots, x_n]$ where~$p$ is a prime number.
In other words, we face the problem of defining a \textit{reduction modulo~$p$} of~$I$.

To date there are two typical approaches to this problem.
One takes an arbitrary set of generators~$F \subseteq \QQ[x_1, \dots, x_n]$ of the 
ideal~$I$, then works with the ideal $\ideal{\prim(F)}$ generated by
the primitive parts $\prim(F) \subseteq \ZZ[x_1, \dots, x_n]$ where
each polynomial is scaled by a suitable rational so that its coefficients
become integers and with no common factor.
The reduction mod~$p$ of~$I$ is then defined to be the ideal in $\FF_p[x_1,\dots, x_n]$ generated by the reductions mod~$p$ of the elements in $\prim(F)$.
This approach has the merit of being easy to compute, but its main drawback is that it depends on the chosen generators of~$I$.

The other approach works with the projection of $I\cap  \ZZ[x_1, \dots, x_n]$ 
into the quotient ring $\FF_p[x_1,\dots, x_n]$.  This definition has the merit of being 
intrinsic to the ideal~$I$, but has the drawback of being not so easy to compute.
For a nice discussion about this topic, see for instance~\cite{NY2018}.

Our idea is different from both of these approaches.
We fix a term ordering~$\sigma$, and let  $\Gsigma \subseteq \QQ[x_1, \dots, x_n]$
be the reduced $\sigma$-Gr\"obner basis of $I$.
Solely for those primes~$p$ which do not divide the denominator of any
coefficient in $\Gsigma$, we
define the reduction mod~$p$ of~$I$ to be the ideal in $\FF_p[x_1,\dots, x_n]$ generated by the reductions mod~$p$ of the elements in $\prim(\Gsigma)$.

Our definition uses  the  ideal $\ideal{\prim(\Gsigma)} \subseteq \ZZxn$.
It turns out that  $\ideal{\prim(F)} \,\subseteq\, \ideal{\prim(\Gsigma)} \,\subseteq\, I\cap \ZZ[x_1, \dots, x_n]$.
It is interesting to observe that both inclusions can be strict. 
For instance, the first inclusion is strict
if we have $F = \{ 2x+y,\; y\}$; for then 
$\ideal{F} =\ideal{\prim(F)} =\ideal{2x+y,\;y} = \ideal{2x,y}$
while $\ideal{\Gsigma} = \ideal{\prim(\Gsigma)} = \ideal{x,\,y}$.
The second inclusion is strict
if  $I = \ideal{x- \tfrac{1}{2}z,\; y- \tfrac{1}{2}z} \subseteq \QQxn$, 
and~$\sigma$ is any term ordering with $x>_\sigma y>_\sigma z$;
for then $\Gsigma = \{x- \tfrac{1}{2}z,\; y- \tfrac{1}{2}z\}$, and
consequently,  $\ideal{\prim(\Gsigma)} =\ideal{2x-z,\; 2y-z}$
while $I\cap \ZZ[x_1,\dots, x_n] = \ideal{x-y,\; 2y-z}$.

Our definition has the merit of being independent of some arbitrary choice of system of generators of~$I$ and easily computable.  One possible objection is that it depends on the term ordering chosen.  But there is a nice way out, which uses the notion of the Gr\"obner fan of $I$,
and frees the definition of the reduction modulo~$p$ from the choice of~$\sigma$.

\smallskip
Here we give a more detailed description of the paper.
In Section~\ref{sec:Reductions modulo p} we use
results proved in~\cite{ABPR}, and introduce the notions of
\textit{$\sigma$-good} and \textit{$\sigma$-bad primes} for~$I$ with respect to a
given term ordering~$\sigma$,
which exploit the uniqueness of the reduced $\sigma$-\gbasis.
Notions of good and bad primes in modular computations are ubiquitous;
see for instance~\cite{BDFLP} for a fine discussion.  However, in our opinion
there is still room for improving the knowledge of this topic.
As a first result, we prove Theorem~\ref{thm:samereduction} which
relates the behaviour of good primes
with respect to two different term orderings.

From the theory of Gr\"obner Fans (see~\cite{MR88}) it follows
that for any ideal $I$ in $P$ all but finitely many primes
are good for all term orderings (see Remark~\ref{rem:deltone}).
  In other words there is an integer $\Delta$,
called the \textit{universal denominator} (see Definition~\ref{def:UniversalDen}),
such that for every prime $p$ which does not divide $\Delta$
we can define the reduction of $I$ to an ideal in $\FF_p[x_1,\ldots,x_n]$
which is independent of any term ordering (see
Definition~\ref{def:Ip}), and hence it depends only on~$I$.

In the context of polynomial ideals there are several notions of good and bad primes in the mathematical literature,
and Section~\ref{sec:Good primes vs lucky primes} is devoted to understanding
how they are interrelated.
We recall the notion of a \textit{minimal strong $\sigma$-\gbasis} for
ideals in $\ZZ[x_1, \dots, x_n]$
and, in Theorems~\ref{thm:orderedprimF}
and~\ref{thm:Rad=Rad}, we highlight the close relationship to
%we point out how it closely relates with
the reduced $\sigma$-\gbasis.
%\john{Given a polynomial $f \in P$
%we define the natural notion of its primitive integer part $\prim(f) \in \ZZ[x_1, \dots, x_n]$
%and extend this elementwise to sets of polynomials
%$F = \{f_1, \dots, f_n\} \subseteq P$.}
Following~\cite{Pa},
we say that $p$ is \textit{Pauer-lucky} for a set of polynomials 
${F \subseteq P}$, if it does not divide the leading coefficients of
any polynomial in a minimal strong $\sigma$-\gbasis
of $\ideal{\prim(F)}$; see Definition~\ref{def:IntegralPart}.
Then, given a term ordering~$\sigma$,
the ideal $I = \ideal{F}$,
and its reduced $\sigma$-\gbasis $\Gsigma$, we 
use the results contained in Theorems~\ref{thm:orderedprimF} 
and~\ref{thm:Rad=Rad} to
 show that if $p$ is Pauer-lucky for $\prim(F)$
then $p$ is $\sigma$-good for $I$ (see Proposition~\ref{prop:luckyandgood}), and that
$p$ is Pauer-lucky for $\prim(\Gsigma)$ if and only if it is $\sigma$-good for $I$
(see Corollary~\ref{cor:lucky=good}).

In Section~\ref{sec:DetectingBadPrimes} we address the problem of detecting $\sigma$-bad primes
when the reduced \text{$\sigma$-\gbasis} (in $\QQxn$) is not known.
In~\cite{Ar} E.A.~Arnold restricted her investigation to the case of
homogeneous ideals, and 
used suitable Hilbert functions to detect some bad primes.
We describe a similar but more general strategy.
The main new idea is to use the term ordering $\sigma$
to order tuples of power products.
In particular, we prove Proposition~\ref{prop:iprecgens} and the key
Lemma~\ref{lemma:lessthan} which pave the way for the proof of the main
Theorem~\ref{thm:sigma-tau} and its Corollary~\ref{cor:smallerisbad},
which gives a nice criterion for detecting relatively bad primes.
In essence, given two term
orderings~$\sigma$ and~$\tau$, and two primes $p$ and $q$ which are
both
\text{$\sigma$-good}, but only one is $\tau$-good, then we can determine which is $\tau$-good
just doing modular computations.
%This result implies the relevance of Corollary~\ref{cor:smallerisbad}
%where a nice criterion for detecting relatively bad primes is
%described.  

Apart from the theoretical advances already illustrated,
are there practical applications of the theoretical results proved in this paper? 
First experiments show that a modular approach for the computation of some \gbases
can benefit from our results. 
%First naive experiments show that our approach is very promising.

Most  examples described in the paper were computed using the
computer algebra system \cocoa\ (see~\cite{cocoalib} and~\cite{cocoa5}).
The computations of minimal strong \gbases were performed with
\textsc{Singular} (see~\cite{Singular}).

%.......................................................
\subsection*{Notation}

For the basic notation and definitions about the theory of \gbases
see~\cite{KR1}, \cite{KR2}, and~\cite{KR3}.
The monoid  of power-products in~$n$ indeterminates
is denoted by $\TT^n$.
We use the convention that $\LT_\sigma(\ideal{0}) = \ideal{0}$.
In particular, if $t = x_1^{a_1}\cdots x_n^{a_n} \in \TT^n$ is a power-product
and $c$ is a coefficient,
we say that $t$ is a \define{term} and $c\, t$ is a \define{monomial}.
Throughout this article, when we use the notation $G = \{g_1, \dots, g_r\}$,
we actually mean that the $r$ elements in $G$ are numbered and distinct.
We use the symbol $\ZZ_\delta$  to represent the localization of $\ZZ$ at the
multiplicative system generated by the integer $\delta$.
Sometimes in the literature the symbol $\ZZ[\frac{1}{\delta}]$ is used instead of $\ZZ_\delta$.
If $p$ is a prime number, the symbol $\ZZ_{\ideal{p}}$ denotes the localization 
of $\ZZ$ at the maximal ideal $\ideal{p}$.

% \cancel{
% In the paper we use the notation $A\subset B$ to indicate that $A$ is contained in $B$,
% including the case $A=B$. We use $A \subsetneq B$ to indicate that 
% $A$ is properly contained in $B$.
% }

There are several instances in the paper where we compare
the minimal set of generators of two monomial ideals in different rings.
Hence  we introduce the following definition.
%\begin{definition}\label{def:MinLT}
Let $K$ be a field, let $P=K[x_1,\dots, x_n]$ be a polynomial ring over $K$,
let $\sigma$ be a term ordering on $\TT^n$, and
let $I$ be an ideal in $P$. The unique minimal set of generators of $\LT_\sigma(I)$
is denoted by \define{$\MinLT_\sigma(I)$}$ \subseteq \TT^n$.
%\end{definition}
We observe that while $\LT_\sigma(I)$ is a monomial ideal in~$P$,
the set $\MinLT_\sigma(I)$ is a subset of $\TT^n$.
Later we introduce the tuple \define{$\Os(I)$} which contains the same elements as $\MinLT_\sigma(I)$
placed in increasing $\sigma$-order (see Definition~\ref{tupleofpowerproducts}).

Let $T = \{ t_1, t_2, \ldots, t_r \}$ be a set of power-products.  We
define the \define{interreduction} of~$T$ to be the unique maximal
subset $T'$ of~$T$ with the property  that there is no pair $(t_i, t_j)$
of distinct elements in $T'$ such that $t_i \divides t_j$.
We say that $T$ is \define{interreduced} if it is equal to its own
interreduction.

The \define{radical} of a positive integer $N$, \define{$\rad(N)$},
is the product of all primes dividing~$N$.
Obviously from the definition we
have $p \divides N \Longleftrightarrow p \divides \rad(N)$ for any prime $p$.
For example, $\rad(240) = 30$. 
Note that, for any positive integer $N$,
we have $\ZZ_N = \ZZ_\delta$ where $\delta = \rad(N)$.

Let $\delta$ be a positive integer, and~$p$ a prime number
not dividing $\delta$.  We write $\pi_p$ to denote
the canonical homomorphism $\ZZ_\delta \To \FF_p$ and all its natural
``coefficientwise'' extensions to
$\ZZ_\delta[x_1, \dots, x_n] \To \FF_p[x_1, \dots, x_n]$;
we call them all \define{reduction homomorphisms modulo~$p$}.

%%%%%%%%%%%%%%%%%%%%%%%%%%%%%%%%
\section{Reductions modulo p}
\label{sec:Reductions modulo p}
%%%%%%%%%%%%%%%%%%%%%%%%%%%%%%%%

In this section we analyse the concept of \textit{reduction modulo a prime $p$}.
In particular, we give a definition for the reduction mod $p$ of an ideal
which is independent of  the particular generators we have.

\goodbreak

\begin{definition}\label{def:densigma}
Let $P=\QQ[x_1,\dots,x_n]$.
\begin{enumerate}
\item Given a polynomial $f\in P$,
we define the \define{denominator of~$f$}, denoted by
\define{${\den}(f)$},
to be the positive least common multiple of
the denominators of the coefficients of~$f$.
In particular, we define $\den(0) = 1$.

\item Given a set of polynomials $F$ in $P$, we define the
   \define{denominator of~$F$}, denoted by \define{$\den(F)$},
to be the least common multiple of $\{ \den(f) \SuchThat f\in F \}$.
For completeness we define $\den(\emptyset) = 1$
where $\emptyset$ denotes the empty set.

\item Given a term ordering $\sigma$ and an ideal $I$ in $P$ with reduced
  $\sigma$-\gbasis $\Gsigma$, we define the
   \define{$\sigma$-denom\-inator of~$I$} to be
   \define{$\den_\sigma(I)$}$ = \den(\Gsigma)$.
\end{enumerate}
\end{definition}

The following easy example shows that $\den_\sigma(I)$
generally depends on $\sigma$.

%%%%%%%%%%%%%%%
\begin{example}\label{ex:DeltaDepends}
Let $P=\QQ[x,y]$ and let $g = x +2y\in P$, and let $I = \ideal{g}$.
Clearly $\{g\}$ is the reduced $\sigma$-\gbasis of $I$ with respect to
any term ordering $\sigma$ with $x >_{\sigma} y$.
Instead, the reduced $\tau$-\gbasis of $I$
with respect to any term ordering~$\tau$ with $y >_{\tau} x$
is $\{ y+\frac{1}{2}x\}$.
Therefore we have $\den_{\sigma}(I) = 1$ while $\den_{\tau}(I) = 2$.
\end{example}

%%%%%%%%%%%

The following lemma collects some important results taken
from~\cite{ABPR} (see also~\cite{NY[18]}).

%%%%%%%%%%%%%%%%%%%%%%%%%%%%%%%%%%%%%%%%

\begin{lemma}\label{lemma:delta_f}
Let $P=\QQxn$, and let $\sigma$ be a term ordering on $\TT^n$.
Let $f \in P \setminus \{0\}$, and $I$ be an ideal in $P$ with reduced $\sigma$-\gbasis $\Gsigma$.
Furthermore, let~$\delta \in \NN^+$ be such that all coefficients
of $f$ and $G_{\sigma,f}$ are in $\ZZ_{\delta}$, where  $G_{\sigma,f}$ is the subset $\{g\in \Gsigma \mid \LT_\sigma(g)\le_\sigma \LT_\sigma(f)\}$.

\begin{enumerate}
\item  \label{item:rewriting}
Every intermediate step of rewriting $f$ via $\Gsigma$
has all coefficients in $\ZZ_{\delta}$.

\item The polynomial $\NF_{\sigma,I}(f)$ has all coefficients in $\ZZ_{\delta}$.
\end{enumerate}
\end{lemma}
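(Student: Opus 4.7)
The plan is to prove part~(a) by induction on the number of rewriting steps, carefully tracking two invariants simultaneously: that every term appearing in an intermediate polynomial is $\le_\sigma \LT_\sigma(f)$, and that every coefficient lies in $\ZZ_\delta$. Part~(b) will then follow immediately, since $\NF_{\sigma,I}(f)$ is by definition the final outcome of such a rewriting.

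First I would fix the setup. Because $\Gsigma$ is the \emph{reduced} $\sigma$-\gbasis of $I$, each $g\in\Gsigma$ is monic, i.e.\ $\LC(g)=1$. A single rewriting step takes an intermediate polynomial $h$, picks a term $c\,t$ of $h$ and a $g\in\Gsigma$ with $\LT_\sigma(g)\divides t$, and replaces $h$ by
\[
h' \;=\; h\;-\;c\cdot\frac{t}{\LT_\sigma(g)}\cdot g,
\]
no division by $\LC(g)$ being necessary. The key point is that the only $g\in\Gsigma$ that can ever be used in such a step must satisfy $\LT_\sigma(g)\le_\sigma t$, hence $g\in G_{\sigma,f}$ provided we know $t\le_\sigma\LT_\sigma(f)$.

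The induction hypothesis is: after $k$ rewriting steps the current polynomial $h_k$ has all its terms $\le_\sigma\LT_\sigma(f)$, and all its coefficients in $\ZZ_\delta$. The base case $h_0=f$ is the hypothesis of the lemma. For the inductive step, suppose we rewrite $h_k$ at a term $c\,t$ using $g\in\Gsigma$ with $\LT_\sigma(g)\divides t$. By hypothesis $t\le_\sigma\LT_\sigma(f)$, so $\LT_\sigma(g)\le_\sigma\LT_\sigma(f)$, meaning $g\in G_{\sigma,f}$; hence every coefficient of $g$ lies in $\ZZ_\delta$. Since $c\in\ZZ_\delta$ by induction and $\LC(g)=1$, the multiplier $c\,(t/\LT_\sigma(g))$ has coefficient in $\ZZ_\delta$, and therefore every coefficient of $h_{k+1}$ remains in $\ZZ_\delta$. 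For the term bound, the term $c\,t$ gets cancelled and the newly introduced terms have the form $-c\,(t/\LT_\sigma(g))\cdot g'$ where $g'$ ranges over the non-leading terms of $g$; each such term is strictly $<_\sigma t\le_\sigma\LT_\sigma(f)$, so all terms of $h_{k+1}$ are still $\le_\sigma\LT_\sigma(f)$. This closes the induction and proves~(a). Part~(b) is then immediate, as $\NF_{\sigma,I}(f)$ is produced by finitely many such rewriting steps.

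The only subtle point, and the one I expect to require a bit of care, is the simultaneous propagation of the two invariants: without the bound $t\le_\sigma\LT_\sigma(f)$ one could be forced to use a $g\in\Gsigma\setminus G_{\sigma,f}$ whose coefficients need not lie in $\ZZ_\delta$, breaking the coefficient invariant; and conversely, it is precisely the reducedness of $\Gsigma$ (monicity plus the absence of divisibilities among leading terms) that allows one to reduce without ever dividing by a leading coefficient. Since these two properties interlock in a single induction, setting up the base step and the inductive step in the correct joint form is really the whole content of the argument.
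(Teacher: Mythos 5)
Your proof is correct. The paper itself gives no argument here — it simply cites \cite[Lemma~3.2]{ABPR} ``restricted to $G_{\sigma,f}$'' — and your induction is precisely the standard argument that citation stands in for: the crucial observation that $\LT_\sigma(g)\divides t$ forces $\LT_\sigma(g)\le_\sigma t\le_\sigma\LT_\sigma(f)$, so only elements of $G_{\sigma,f}$ can ever be used, is exactly the point of the restriction, and your two interlocking invariants (terms bounded by $\LT_\sigma(f)$, coefficients in $\ZZ_\delta$) together with the monicity of a reduced \gbasis carry the induction through correctly; part~(b) then follows from termination of the normal-form rewriting as you say.
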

\begin{proof}
Follows easily from~\cite[Lemma~3.2]{ABPR} restricted to $G_{\sigma,f}$.
\end{proof}

The following theorem is the foundation stone of our
investigation.  In particular, it sets the right context in which the
reduction mod $p$ of a \gbasis is the \gbasis of the ideal it
generates (claim~\ref{item:pi_p}).

%%%%%%%%%%%%%%%%%%%% proposition 2.4 %%%%%%%%%%%%%%%%%%
\begin{theorem}\label{thm:FromMinPoly}\textbf{(Reduction modulo $p$ of Gr\"obner Bases)}\\
Let $P=\QQxn$, let $\sigma$ be a term ordering on $\TT^n$.
Let $I$ be an ideal in $P$ with reduced
$\sigma$-\gbasis $\Gsigma$.
Let $p$ be a prime number which does not divide $\den_\sigma(I)$.
%% Then let $\delta$ be a positive integer such that  $f$ and $\Gsigma$ have
%% all coefficients in $\ZZ_\delta$,
%% \ie~$\delta$ is a non-zero multiple of $\rad(\den(f) \cdot \den_\sigma(I))$,
%% and let $p$ be a prime number which does not divide $\delta$.
\begin{enumerate}

%% \item Every intermediate step of rewriting $f$ via $\Gsigma$
%% has all coefficients in $\ZZ_\delta$.

%% \item The polynomial $\NF_{\sigma,I}(f)$ has all coefficients in $\ZZ_\delta$.

\item \label{item:pi_p}
  The set $\pi_p(\Gsigma)$ is the reduced $\sigma$-\gbasis
of the ideal $\ideal{\pi_p(\Gsigma)}$.

\item The set of the residue classes of the elements in $\TT^n{\setminus}\LT_\sigma(I)$ is an $\FF_p$-basis
of the quotient ring $\FF_p[x_1, \dots, x_n]/\ideal{\pi_p(\Gsigma)}$.
%% \item Let $B = \TT^n{\setminus}\LT_\sigma(I)$ and let $Q_p$ denote the
%%   quotient ring $\FF_p[x_1, \dots, x_n]/\ideal{\pi_p(\Gsigma)}$. Then
%%   the set of the residue classes of $B$ is an $\FF_p$-basis of $Q_p$.

\item For every polynomial $f\in P$ such that  $p \notdiv \den(f)$
we have the equality
 $\pi_p(\NF_{\sigma, I}(f)) = \NF_{\sigma, \ideal{\pi_p(\Gsigma)}}(\pi_p(f))$.

%% \item We have the equality
%%  $\pi_p(\NF_{\sigma, I}(f)) = \NF_{\sigma, \ideal{\pi_p(\Gsigma)}}(\pi_p(f))$.

\end{enumerate}
\end{theorem}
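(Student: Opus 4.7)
The plan is to derive all three claims from Lemma~\ref{lemma:delta_f} together with Buchberger's criterion. Set $\delta=\den_\sigma(I)=\den(\Gsigma)$; by hypothesis $p\notdiv\delta$, so $\pi_p\colon \ZZ_\delta[x_1,\dots,x_n]\to\FF_p[x_1,\dots,x_n]$ makes sense on every element of $\Gsigma$. Since $\Gsigma$ is \emph{reduced}, each $g\in\Gsigma$ is monic, hence $\LT_\sigma(\pi_p(g))=\LT_\sigma(g)$. This single observation is the workhorse of the proof.

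For claim~\ref{item:pi_p} I would verify Buchberger's criterion for $\pi_p(\Gsigma)$. For any pair $g_i,g_j\in\Gsigma$, the $S$-polynomial $S(g_i,g_j)$ lies in $P$ with coefficients still in $\ZZ_\delta$, and it rewrites to $0$ via $\Gsigma$. By Lemma~\ref{lemma:delta_f}\,(a) every intermediate step of this rewriting has coefficients in $\ZZ_\delta$, so $\pi_p$ can be applied termwise to the whole trace, producing a rewriting of $S(\pi_p(g_i),\pi_p(g_j))=\pi_p(S(g_i,g_j))$ to $0$ via $\pi_p(\Gsigma)$. Thus $\pi_p(\Gsigma)$ is a $\sigma$-\gbasis of the ideal it generates, and in particular $\LT_\sigma(\ideal{\pi_p(\Gsigma)})=\ideal{\LT_\sigma(g):g\in\Gsigma}=\LT_\sigma(I)$. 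Reducedness of $\pi_p(\Gsigma)$ is then immediate: the leading coefficients remain $1$, and the support of each $\pi_p(g)$ is contained in the support of $g$, which by reducedness of $\Gsigma$ meets $\LT_\sigma(I)$ only in the leading term.

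Claim~(b) follows at once from~\ref{item:pi_p} via Macaulay's basis theorem: since $\LT_\sigma(\ideal{\pi_p(\Gsigma)})=\LT_\sigma(I)$, the set of residue classes of power-products in $\TT^n\setminus\LT_\sigma(I)$ is an $\FF_p$-basis of $\FF_p[x_1,\dots,x_n]/\ideal{\pi_p(\Gsigma)}$.

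For claim~(c), let $\delta'\in\NN^+$ be a multiple of both $\den(\Gsigma)$ and $\den(f)$ with $p\notdiv\delta'$ (e.g.\ $\delta'=\lcm(\delta,\den(f))$). By Lemma~\ref{lemma:delta_f} applied with this $\delta'$, the division $f=\sum_i q_i g_i+\NF_{\sigma,I}(f)$ can be carried out with every intermediate coefficient, and hence every $q_i$ and every coefficient of $\NF_{\sigma,I}(f)$, lying in $\ZZ_{\delta'}$. Applying $\pi_p$ gives
\[
\pi_p(f)=\sum_i \pi_p(q_i)\,\pi_p(g_i)+\pi_p(\NF_{\sigma,I}(f)).
\]
By definition of normal form, no term of $\NF_{\sigma,I}(f)$ lies in $\LT_\sigma(I)$; since $\pi_p$ can only delete monomials, the support of $\pi_p(\NF_{\sigma,I}(f))$ is still disjoint from $\LT_\sigma(I)=\LT_\sigma(\ideal{\pi_p(\Gsigma)})$. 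Thus $\pi_p(\NF_{\sigma,I}(f))$ is in normal form with respect to $\pi_p(\Gsigma)$, and by uniqueness of the normal form it must equal $\NF_{\sigma,\ideal{\pi_p(\Gsigma)}}(\pi_p(f))$.

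The only delicate point is the commutativity of rewriting with $\pi_p$, which is exactly what Lemma~\ref{lemma:delta_f} guarantees; once that is in hand, every assertion reduces to the monic-leading-term observation plus the invariance $\LT_\sigma(\ideal{\pi_p(\Gsigma)})=\LT_\sigma(I)$.
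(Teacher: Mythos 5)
Your proof is correct. Note, however, that the paper itself offers no argument for this theorem: its ``proof'' is the single line ``See~\cite[Theorem~3.7]{ABPR}'', so you have supplied a self-contained derivation where the authors defer to an external reference. Your route is the natural one and rests on exactly the two pillars one would expect: (i) the elements of a \emph{reduced} basis are monic, so $\LT_\sigma(\pi_p(g))=\LT_\sigma(g)$ and nothing degenerates at the leading term; (ii) Lemma~\ref{lemma:delta_f} guarantees that the entire rewriting trace lives in $\ZZ_\delta[x_1,\dots,x_n]$, so $\pi_p$ commutes with division. One small point worth making explicit in claim~\ref{item:pi_p}: when you push the rewriting trace of $S(g_i,g_j)$ through $\pi_p$, some steps may become vacuous (their coefficient vanishes mod $p$), and the image $\pi_p(S(g_i,g_j))$ may even have a smaller leading term than $S(g_i,g_j)$; the clean way to phrase the conclusion is that the standard representation $S(g_i,g_j)=\sum_k q_k g_k$ with $\LT_\sigma(q_kg_k)<_\sigma\lcm(\LT_\sigma(g_i),\LT_\sigma(g_j))$ maps to a representation with the same strict bound, which is precisely what Buchberger's criterion requires. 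With that phrasing your argument for~(a) is airtight, and (b) and (c) follow as you say from Macaulay's basis theorem and uniqueness of normal forms respectively.
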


\begin{proof}
%%Clearly claims~(a) and~(b) are special cases of Lemma~\ref{lemma:delta_f}.
See~\cite[Theorem~3.7]{ABPR}.
\end{proof}

%\medskip
\subsection{Good Primes}

% We recall from~\cite{ABPR} that if the prime number $p$ does
% not divide $\den_\sigma(I)$, then the ideal~$\ideal{\pi_p(\Gsigma)}$
% is denoted by $I_{(p,\sigma)}$.

Along the lines in~\cite{ABPR},
Theorem~\ref{thm:FromMinPoly} motivates the following definitions.

\begin{definition}\label{def:ugly}
Let $P = \QQ[x_1,\ldots,x_n]$.
\begin{enumerate}
\item Let $F$ be a finite set of polynomials in $P$.
We say that a prime $p$ is \define{bad} for $F$
if $p \divides \den(F)$, \ie~$p$ divides the denominator of at least one
coefficient of at least one polynomial in $F$.

\item Let  $\sigma$ be a term ordering on $\TT^n$,
let $I$ be an ideal in $P$, and let $\Gsigma$ be the
reduced $\sigma$-\gbasis of $I$.
If $p$ is bad for $\Gsigma$ we say that $p$ is \define{$\sigma$-bad for $I$}.
Otherwise we say that $p$ is  \define{$\sigma$-good for $I$}.

\item  If $p$ is a  $\sigma$-good prime for $I$
we define the \define{$(p,\sigma)$-reduction of $I$}
to be the ideal $I_{(p,\sigma)}$
 $=\ideal{\pi_p(\Gsigma)}\subseteq \FF_p[x_1,\dots,x_n]$
generated by the reductions modulo~$p$ of the polynomials in $\Gsigma$.

\end{enumerate}
\end{definition}

Now we can reinterpret
Theorem~\ref{thm:FromMinPoly}.\ref{item:pi_p} as follows.

\begin{remark}\label{rem:sameLT}
Let $P=\QQxn$,
and $\sigma$ a term ordering on $\TT^n$.
Let $I$ be an ideal in~$P$, and $\Gsigma$ its reduced
$\sigma$-\gbasis.  For every $\sigma$-good prime $p$ for $I$ we have
\begin{enumerate}
\item the set $\pi_p(\Gsigma)$ is the reduced $\sigma$-\gbasis
  of~$I_{(p,\sigma)}$, \ie~the ideal it generates.

\item % For every $\sigma$-good prime $p$ for $I$ we have
$\MinLT_\sigma(I) = \MinLT_\sigma(I_{(p,\sigma)})$.
\end{enumerate}
\end{remark}

\begin{remark}
  We observe that the apparently simplistic definition, stating
  that $p$ is $\sigma$-good for $I$ if and only if $p$ does not divide
  $\den(\Gsigma)$, acquires a much deeper meaning after the above
  remark, and provides further support for the notation
  $I_{(p,\sigma)}$ since the reduced $\sigma$-\gbasis of any ideal is
  unique.
\end{remark}

Theorem~\ref{thm:FromMinPoly} turns out to be the essential tool for
proving the following result, which tells us that, for all but
finitely many primes, we may take simply $\ideal{\pi_p(F)}$ as the
reduction modulo $p$ of the ideal $\ideal{F}$.  Naturally the set of
suitable primes depends on $F$, the given system of generators.  This
dependence prompts us to prefer using reduced \gbases as generating
sets.

%% \renzo{Il prossimo teorema permette di dire che al di fuori di un numero finito di primi,
%% va bene considerare come ``ideale modulo p'' anche $\ideal{\pi_p(F)}$. Ma naturalmente
%% questi primi dipendono dal sistema di generatori. Questa \`e una motivazione in pi\`u
%% per usare le basi di Gr\"obner ridotte}.

\goodbreak
\begin{theorem}\label{thm:GensAndRGB}
Let $\sigma$ be a term ordering on $\TT^n$, let $P=\QQxn$,
let $I$ be an ideal in~$P$, and let $\Gsigma$ be its reduced
$\sigma$-\gbasis.
Then let $F$ be any finite set of polynomials in the ideal~$I$, and let $\delta$  be
a positive integer such that both
$\Gsigma$ and $F$ are contained in $\ZZ_\delta[x_1, \dots, x_n]$.
Let~$p$ be a prime number such that $p \notdiv \delta$.
\begin{enumerate}
\item We have $\rad( \den_\sigma(I)) \divides \delta$.

\item We have
$\ideal{\pi_p(F)} \;\subseteq\; I_{(p,\sigma)}
\;\subseteq\; \FF_p[x_1, \dots, x_n]$.

\item
If there  exists a matrix $M$ with entries in~$\ZZ_\delta[x_1, \dots, x_n]$ such that
$\Gsigma = F\cdot M$, then we have $\ideal{\pi_p(F)} = I_{(p,\sigma)}$.
\end{enumerate}
\end{theorem}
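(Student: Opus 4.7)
The overall plan is to handle the three claims in order, with claim~(1) providing the key input that $p$ is $\sigma$-good for $I$ (so $I_{(p,\sigma)}$ is defined), and claims~(2)--(3) being deductions that rely on Lemma~\ref{lemma:delta_f} and a simple matrix manipulation, respectively.

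For claim~(1), the plan is to observe that $\Gsigma \subseteq \ZZ_\delta[x_1,\dots,x_n]$ means every coefficient appearing in $\Gsigma$ has denominator whose prime factors lie among those of $\delta$. Hence every prime dividing $\den_\sigma(I) = \den(\Gsigma)$ also divides $\delta$, so $\rad(\den_\sigma(I))$ divides $\rad(\delta)$, which divides $\delta$. This step is essentially bookkeeping on prime divisors. An immediate corollary is that since $p \notdiv \delta$, we get $p \notdiv \den_\sigma(I)$, so $p$ is $\sigma$-good for $I$ and $I_{(p,\sigma)} = \ideal{\pi_p(\Gsigma)}$ is well-defined; the reduction homomorphism $\pi_p$ is also defined on $\ZZ_\delta[x_1,\dots,x_n]$.

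For claim~(2), the inclusion $I_{(p,\sigma)} \subseteq \FF_p[x_1,\dots,x_n]$ is trivial. For $\ideal{\pi_p(F)} \subseteq I_{(p,\sigma)}$, pick $f \in F$. Since $f \in I$, the normal form $\NF_{\sigma,I}(f)$ is $0$, so the division algorithm reduces $f$ via $\Gsigma$ to zero, producing an expression $f = \sum_i h_i\, g_i$ with $g_i \in \Gsigma$. The hypotheses that $F$ and $\Gsigma$ lie in $\ZZ_\delta[x_1,\dots,x_n]$ put us exactly in the setting of Lemma~\ref{lemma:delta_f} (part~(a)): every intermediate reduction step, and hence the accumulated quotients $h_i$, have coefficients in $\ZZ_\delta$. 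Applying $\pi_p$ (legitimate because $p \notdiv \delta$) yields $\pi_p(f) = \sum_i \pi_p(h_i)\, \pi_p(g_i) \in \ideal{\pi_p(\Gsigma)} = I_{(p,\sigma)}$. Since this holds for every $f \in F$, we conclude $\ideal{\pi_p(F)} \subseteq I_{(p,\sigma)}$. The main obstacle here is the (mild) passage from ``intermediate reductions have coefficients in $\ZZ_\delta$'' to ``the division-algorithm cofactors $h_i$ have coefficients in $\ZZ_\delta$''; but this is built into the standard division procedure (each $h_i$ is an accumulation of leading-monomial quotients of intermediate remainders by leading monomials of elements of $\Gsigma$, all of which lie in $\ZZ_\delta$).

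For claim~(3), suppose $\Gsigma = F \cdot M$ with $M$ a matrix over $\ZZ_\delta[x_1,\dots,x_n]$. Applying the reduction homomorphism $\pi_p$ entry-wise (again legitimate by $p \notdiv \delta$) gives $\pi_p(\Gsigma) = \pi_p(F) \cdot \pi_p(M)$, so each element of $\pi_p(\Gsigma)$ is an $\FF_p[x_1,\dots,x_n]$-linear combination of elements of $\pi_p(F)$. Thus $I_{(p,\sigma)} = \ideal{\pi_p(\Gsigma)} \subseteq \ideal{\pi_p(F)}$, and combined with the inclusion in~(2) this yields the desired equality. This last step is essentially formal once~(2) is in hand.
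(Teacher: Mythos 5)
Your proposal is correct and follows essentially the same route as the paper: claim~(1) by inspecting which primes can appear in $\den(\Gsigma)$, claim~(2) by reducing each $f\in F$ to zero via $\Gsigma$ and using Lemma~\ref{lemma:delta_f}.\ref{item:rewriting} to keep the cofactors in $\ZZ_\delta[x_1,\dots,x_n]$ before applying $\pi_p$, and claim~(3) by applying $\pi_p$ to the matrix identity $\Gsigma = F\cdot M$. The only cosmetic difference is that the paper phrases step~(2) in matrix form ($F=\Gsigma\cdot A$), while you write the linear combinations elementwise.
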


\begin{proof} To prove claim~(a) we observe that the minimal localization of $\ZZ$
where $\Gsigma$ is contained is $\ZZ_{\den_\sigma(I)}[x_1, \dots, x_n]$, and the
conclusion follows.

To prove claim~(b) we observe that Theorem~\ref{thm:FromMinPoly}.\ref{item:pi_p} implies that
every element of~$F$ can be written as a linear combination of elements of $\Gsigma$
where the ``coefficients'' are polynomials in~$\ZZ_\delta[x_1, \dots, x_n]$.
In general, there will be several ways to reduce each element of~$F$ by the
basis $\Gsigma$, we may pick any one, and use the corresponding linear combination.
We can view $F$ and $\Gsigma$ as row-matrices by ordering their elements in some way.
Then writing the linear combinations as columns, we obtain a matrix~$A$ over
$\ZZ_\delta[x_1, \dots, x_n]$
(see Lemma~\ref{lemma:delta_f}.\ref{item:rewriting})
satisfying $F = \Gsigma\cdot A$.  This implies
that $\pi_p(F) = \pi_p(\Gsigma)\cdot \pi_p(A)$, concluding the proof since
$\pi_p(\Gsigma)$ generates $I_{(p,\sigma)}$.

Finally, we prove~(c). By claim~(a) the prime $p$ is $\sigma$-good for $I$,
hence we have the equality $I_{(p,\sigma)} = \ideal{\pi_p(\Gsigma)}$.
Moreover, we have $\pi_p(\Gsigma) = \pi_p(F)\cdot\, \pi_p(M)$ hence the implication
$I_{(p,\sigma)} \,\subseteq\, \ideal{\pi_p(F)}$ follows.
The reverse inclusion follows from~(b), and
the proof is complete.
\end{proof}

The following easy example shows that the inclusion in claim~(b) can be strict
even when $F$ is a generating set.

%%%%%%%%%%%%%%%
\begin{example}\label{ex:strictinclusion}
We follow the notation in the proof above.

\begin{lstlisting}[alsolanguage=cocoa]
/**/ use P ::= QQ[x,y,z], DegRevLex;
/**/ F := [x +2*z,  x +2*y];  I := ideal(F);
/**/ G := ReducedGBasis(I);  G;
[x +2*z,  y -z]
/**/ [GenRepr(g, I) | g in G];
[[1,  0],  [-1/2,  1/2]]
\end{lstlisting}

The ``new prime'' $2$ shows up in the denominators of the coefficients
representing the reduced $\sigma$-\gbasis elements as linear
combinations of the original generators.
Now we look at what happens modulo $2$ when we create an ideal from the
original generators, and when we create an ideal from the reduced \gbasis.

\begin{lstlisting}[alsolanguage=cocoa]
/**/ use P2 ::= ZZ/(2)[x,y,z], DegRevLex;
/**/ pi2 := PolyRingHom(P, P2, CanonicalHom(QQ,P2),indets(P2));
/**/ J2 := ideal(apply(pi2, F));
/**/ ReducedGBasis(J2);
[x]
/**/ I2 := ideal(apply(pi2, G));
/**/ ReducedGBasis(I2);
[y +z,  x]
\end{lstlisting}
\smallskip
\noindent
Here we see that the inclusion in Theorem~\ref{thm:GensAndRGB}.b
can be strict even though the prime $p=2$ is $\degrevlex$-good for $I$.
In the next section we shall see that $2$ is not a ``lucky prime'' for $F$.
\end{example}

Next we present the main result of this subsection.
It examines the situation when a prime is good with respect to
two different term orderings.

\begin{theorem}\label{thm:samereduction}
Let $\sigma$ and $\tau$ be two term orderings
on $\TT^n$, let $P=\QQxn$,  and let $I$ be an ideal in $P$.
Then let $\Gsigma$ and $\Gtau$ be the reduced \groebner
bases of~$I$ with respect to $\sigma$ and $\tau$, and let $p$ be a prime
which is both $\sigma$-good and $\tau$-good for~$I$.
\begin{enumerate}
\item We have the equality $I_{(p,\sigma)} = I_{(p,\tau)}$.

\item \label{item:tau}
The reduced $\tau$-\gbasis of $I_{(p,\sigma)}$ is $\pi_p(\Gtau)$.
\end{enumerate}
\end{theorem}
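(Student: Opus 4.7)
The plan is to reduce both parts to the ``subset of $I$'' claim of Theorem~\ref{thm:GensAndRGB}(b), used symmetrically in $\sigma$ and $\tau$.

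First I would choose a common localization of $\ZZ$ that contains both \gbases. Set $\delta = \den(\Gsigma)\cdot\den(\Gtau)$. Since $p$ is $\sigma$-good and $\tau$-good, $p$ divides neither $\den(\Gsigma)$ nor $\den(\Gtau)$, so $p\notdiv\delta$. Both $\Gsigma$ and $\Gtau$ lie in $\ZZ_\delta[x_1,\dots,x_n]$, and both are subsets of~$I$.

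For (a), apply Theorem~\ref{thm:GensAndRGB}(b) twice. Using $\sigma$ and taking as the auxiliary set $F := \Gtau \subseteq I$ (which is allowed since $\Gtau \subseteq \ZZ_\delta[x_1,\dots,x_n]$ and $p\notdiv\delta$), one obtains
\[
\ideal{\pi_p(\Gtau)} \;\subseteq\; I_{(p,\sigma)}.
\]
By the definition of $(p,\tau)$-reduction, the left-hand side equals $I_{(p,\tau)}$. Swapping the roles of $\sigma$ and $\tau$ and using $F := \Gsigma$ yields
\[
\ideal{\pi_p(\Gsigma)} \;=\; I_{(p,\sigma)} \;\subseteq\; I_{(p,\tau)}.
\]
The two inclusions give the desired equality.

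For (b), part (a) tells us that $I_{(p,\sigma)}$ and $I_{(p,\tau)}$ are literally the same ideal in $\FF_p[x_1,\dots,x_n]$. Since $p$ is $\tau$-good for $I$, Remark~\ref{rem:sameLT}(a), applied with the ordering $\tau$, states that $\pi_p(\Gtau)$ is the reduced $\tau$-\gbasis of $I_{(p,\tau)}$. Rewriting $I_{(p,\tau)}$ as $I_{(p,\sigma)}$ by part (a) yields the conclusion.

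The only subtle point, and the one worth spelling out carefully, is that Theorem~\ref{thm:GensAndRGB}(b) is stated for an arbitrary finite subset $F$ of $I$ together with a denominator bound $\delta$; we are using it with the \emph{other} ordering's reduced \gbasis playing the role of~$F$. There is no obstacle beyond verifying that the chosen $\delta$ simultaneously dominates both $\den(\Gsigma)$ and $\den(\Gtau)$, which is immediate. No genuinely new ingredients beyond Theorem~\ref{thm:GensAndRGB} and Remark~\ref{rem:sameLT} are needed.
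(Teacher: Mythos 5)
Your proposal is correct and follows essentially the same route as the paper: both prove (a) by applying Theorem~\ref{thm:GensAndRGB}(b) twice with the other ordering's reduced \gbasis playing the role of $F$ over a common localization $\ZZ_\delta$ (the paper takes $\delta$ to be the lcm rather than the product of the two denominators, an immaterial difference), and both deduce (b) from (a) together with Remark~\ref{rem:sameLT}.
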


\begin{proof}
Since claim~(b) follows immediately from~(a) and Remark~\ref{rem:sameLT},
it is sufficient to prove claim~(a).
Let  $\delta = {\rm lcm}(\den_{\sigma}(I),\, \den_{\tau}(I))$, so
both $\Gsigma$ and~$\Gtau$ are contained in the ring $\ZZ_\delta[x_1, \dots, x_n]$.
From the assumption about $p$ we may apply Theorem~\ref{thm:GensAndRGB}
  with $F=\Gtau$    to deduce  that
 $I_{(p, \tau)} =\ideal{\pi_p(\Gtau)} \subseteq\, I_{(p, \sigma)}$.
  Applying Theorem~\ref{thm:GensAndRGB} again, after exchanging the roles
  of $\sigma$ and $\tau$, shows that
 $I_{(p, \sigma)} =\ideal{\pi_p(\Gsigma)} \subseteq I_{(p, \tau)}$.
  This proves the claim.
\end{proof}

%\medskip
\subsection{Universal Denominator}
In this subsection we recall some facts from  \groebner Fan Theory
(see~\cite{MR88}) and use them to define the \textit{universal denominator} of an ideal.

\begin{remark}\label{rem:deltone}
It is well-known that the \groebner fan of an ideal is finite (\eg~see~\cite{MR88}),
hence for every ideal in $\QQ[x_1,\ldots,x_n]$
there are only finitely many distinct reduced \groebner bases.
Each of these bases has its own corresponding denominator; thus any prime which
does not divide any of these denominators is good for all term orderings.
\end{remark}

This remark motivates the following definition.

\begin{definition}\label{def:UniversalDen}
Let $I$ be an ideal in $\QQ[x_1,\ldots,x_n]$.
Then the least common multiple of all~$\den_\sigma(I)$, as we vary $\sigma$,
is called the \define{universal denominator}
of $I$, and is denoted by~$\Delta(I)$.
\end{definition}

\begin{remark}
  We now see a big advantage of our choice of using reduced \gbases:
the finiteness of the ``fan'' of the reduced \gbases enables us to make this definition.
  If we allow more general generating sets then there is
  no finite ``universal denominator''.  For example, the ideal $\ideal{x,y}$
  admits generating sets such as $\{x+\tfrac{1}{p}y,\, y\}$ for any
  prime~$p$.
\end{remark}

Next we show the
existence of a well-behaved notion of reduction of $I$ modulo $p$
which is independent of the term orderings.

\begin{proposition}\label{prop:IdealModp}
Let $I$ be an ideal in $P$, let $\Delta(I)$ be its universal denominator,
and let $p$ be a prime not dividing $\Delta(I)$.
Then $I_{(p, \sigma)}$ does not depend on~$\sigma$.
\end{proposition}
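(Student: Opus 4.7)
The plan is to derive this as a direct consequence of Theorem~\ref{thm:samereduction}.(a). The key observation is that once we know $p$ is simultaneously $\sigma$-good for \emph{every} term ordering $\sigma$, the independence claim is already packaged in that theorem.

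First I would unwind the definition of $\Delta(I)$. By Remark~\ref{rem:deltone} the \groebner fan of $I$ is finite, so as $\sigma$ ranges over all term orderings on $\TT^n$ there are only finitely many distinct reduced \gbases $\Gsigma$, and hence only finitely many distinct values $\den_\sigma(I)$. Therefore the least common multiple $\Delta(I) = \lcm_\sigma \den_\sigma(I)$ is a well-defined positive integer, and each $\den_\sigma(I)$ divides $\Delta(I)$.

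Next I would note that since $p \notdiv \Delta(I)$, we have $p \notdiv \den_\sigma(I)$ for every term ordering $\sigma$. By Definition~\ref{def:ugly}.(b), this means that $p$ is $\sigma$-good for $I$ simultaneously for all $\sigma$, so $I_{(p,\sigma)}$ is defined for every term ordering.

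Finally, given any two term orderings $\sigma$ and $\tau$, the prime $p$ is both $\sigma$-good and $\tau$-good for $I$, so Theorem~\ref{thm:samereduction}.(a) applies and yields $I_{(p,\sigma)} = I_{(p,\tau)}$. Since $\sigma$ and $\tau$ were arbitrary, $I_{(p,\sigma)}$ does not depend on $\sigma$. There is no real obstacle here: once one has invested in the finiteness of the \groebner fan (Remark~\ref{rem:deltone}) and in Theorem~\ref{thm:samereduction}, this proposition is essentially a one-line reformulation. The statement is really what makes Definition~\ref{def:UniversalDen} useful, rather than something that requires new technical input.
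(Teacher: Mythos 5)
Your proposal is correct and follows exactly the paper's own argument: note that $p\notdiv\Delta(I)$ forces $p$ to be $\sigma$-good for every term ordering, then invoke Theorem~\ref{thm:samereduction}.(a) to conclude $I_{(p,\sigma)}=I_{(p,\tau)}$ for any two orderings. The extra remarks on the finiteness of the \groebner fan merely spell out why $\Delta(I)$ is well defined, which the paper delegates to Remark~\ref{rem:deltone}.
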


\begin{proof}
%It is an immediate consequence of Theorem~\ref{thm:samereduction}.
For any term orderings $\sigma$ and $\tau$,
the prime $p$ is both $\sigma$-good and $\tau$-good.
So, by Theorem~\ref{thm:samereduction},
we have $I_{(p, \sigma)} = I_{(p, \tau)}$.
\end{proof}

This proposition motivates the following definition.

\begin{definition}\label{def:Ip}
  Let $I$ be a non-zero ideal in $P$, let $\Delta(I)$ be its universal
  denominator, and let $p$ be a prime not dividing $\Delta(I)$.  Then
  the \define{reduction of $I$ modulo $p$}, denoted~$I_p$, is the
  ideal $I_{(p, \sigma)}$, for any choice of~$\sigma$.
\end{definition}

 The main practical problem related to this definition is the computation of the
 universal denominator of $I$ which is, in general, not an easy task.
 Let us see some examples.

%%%%%%%%%%%%%%%
\begin{example}\label{ex:Deltone3}
Let $P=\QQ[x,y,z]$ and let
$I = \ideal{x^2  -y,\;  x y +z +1,\;  z^2  +x}$.
It is a zero-dimensional ideal and its  \groebner fan  consists of twelve cones.
\begin{lstlisting}[alsolanguage=cocoa]
/**/ use P ::= QQ[x,y,z];
I := ideal(x^2 -y,  x*y +z +1,  z^2 +x);
/**/ GF := GroebnerFanIdeals(I);
/**/ [ReducedGBasis(J) | J in GF];
\end{lstlisting}
\vskip-0.2cm
\begin{lstlisting}[alsolanguage=cocoa, basicstyle=\tiny\ttfamily]
  [z^2 +x,  x*y +z +1,  x^2 -y,  y^2 +x*z +x],
  [x +z^2,  y*z^2 -z -1,  z^4 -y,  y^2 -z^3 -z^2],
  [x*y +z +1,  z^2 +x,  x^2 -y,  x*z +y^2 +x,  y^3 +x -2*z -1,  y^2*z -y^2 -x -y],
  [x +z^2,  y*z^2 -z -1,  z^3 -y^2 +z^2,  y^2*z -y^2 +z^2 -y,  y^3 -z^2 -2*z -1],
  [z +x*y +1,  x^2 -y,  y^3 +2*x*y +x +1],
  [z +(-1/2)*y^3 +(-1/2)*x +1/2,  x^2 -y,  x*y +(1/2)*y^3 +(1/2)*x +1/2,
       y^5 +(-1/2)*y^4 +(1/4)*y^3 +(-7/4)*x -3*y^2 +(-5/2)*y +1/4],
  [z +(-2/7)*y^5 +(1/7)*y^4 +(-4/7)*y^3 +(6/7)*y^2 +(5/7)*y +3/7,
       x +(-4/7)*y^5 +(2/7)*y^4 +(-1/7)*y^3 +(12/7)*y^2 +(10/7)*y -1/7,  y^6 -2*y^3 -4*y^2 -y +1],
  [y -x^2,  z +x^3 +1,  x^6 +2*x^3 +x +1],
  [z^2 -y^3 +2*z +1,  x +y^3 -2*z -1,  y^2*z +y^3 -2*z -y^2 -y -1,  y^4 -2*y*z -z -y -1],
  [z^2 +2*z -y^3 +1,  x -2*z +y^3 -1,  y*z +(-1/2)*y^4 +(1/2)*z +(1/2)*y +1/2,
       y^5 +(-1/2)*y^4 +(-7/2)*z +2*y^3 -3*y^2 +(-5/2)*y -3/2],
  [y -x^2,  x^3 +z +1,  z^2 +x],
  [x +z^2,  y -z^4,  z^6 -z -1]
\end{lstlisting}
So we have $\Delta(I) = 2^2 {\cdot} 7$.
Consequently the reduction $I_p$  is
defined for every prime $p$ other than~$2$ and $7$,
and is generated by the reduction modulo $p$ of any of these \gbases.
\end{example}

% %%%%%%%%%%%%%%%
% \begin{example}\label{ex:Deltone1}
% Let $P=\QQ[x,y,z]$ and let $I = \ideal{x^2-3y, \ y^3 -2z +5, \ z^2-12x }$.
% It is a zero-dimensional ideal and its  \groebner fan  consists of seven cones.
% % \smallskip
% % \john{lasciamo perdere l'incoerenza tra ``indent(GF)'' e il vero output?}
% % \john{Ora preferisco non mettere \texttt{indent}; credo si ammissible riformattare l'output}
% \begin{lstlisting}[alsolanguage=cocoa]
% /**/ use P ::= QQ[x,y,z];
% /**/ I := ideal(x^2-3*y,  y^3-2*z+5,  z^2-12*x);
% /**/ GF := GroebnerFanIdeals(I);
% /**/ [ReducedGBasis(J) | J in GF];
% \end{lstlisting}
% \vskip-0.5cm
% \begin{lstlisting}[alsolanguage=cocoa, basicstyle=\scriptsize\ttfamily]
% [z^2-12*x,  x^2-3*y,  y^3-2*z+5],
% [y-(1/3)*x^2,  z^2-12*x,  x^6-54*z+135],
% [x-(1/12)*z^2,  y-(1/432)*z^4,  z^12-161243136*z+403107840],
% [x-(1/12)*z^2,  z^4-432*y,  y^3-2*z+5],
% [z-(1/2)*y^3-5/2,  x^2-3*y,  y^6+10*y^3-48*x+25],
% [y-(1/3)*x^2,  z-(1/54)*x^6-5/2,  x^12+270*x^6-34992*x+18225],
% [z-(1/2)*y^3-5/2,  x-(1/48)*y^6-(5/24)*y^3-25/48,
%                    y^12+20*y^9+150*y^6+500*y^3-6912*y+625]
% \end{lstlisting}
% We have $\Delta(I) = 2^4 {\cdot} 3^3$.
% Consequently the reduction $I_p$  is
% defined for every prime $p$ other than~$2$ and $3$.
% \end{example}

% %%%%%%%%%%%%%%%
% \begin{example}\label{ex:Deltone2}
% Let $P=\QQ[x,y,z]$ and let $I = \ideal{x^2-2y+5z, \ xz -3y^2 -x}$.
% Its  \groebner fan  consists of eight cones.
% \begin{lstlisting}[alsolanguage=cocoa]
% /**/ use P ::= QQ[x,y,z];
% /**/ I := ideal(x^2-2*y +5*z,  x*z-3*y^2-x);
% /**/ GF := GroebnerFanIdeals(I);
% /**/ [ReducedGBasis(J) | J in GF];
% \end{lstlisting}
% \vskip-0.5cm
% \begin{lstlisting}[alsolanguage=cocoa, basicstyle=\scriptsize\ttfamily]
% [y^2-(1/3)*x*z+(1/3)*x,  x^2-2*y+5*z],
% [x*z-3*y^2-x,   x^2+5*z-2*y,  x*y^2+(5/3)*z^2-(2/3)*y*z-(5/3)*z+(2/3)*y,
%            y^4+(5/9)*z^3-(2/9)*y*z^2-(10/9)*z^2+(4/9)*y*z+(5/9)*z-(2/9)*y],
% [x^2+5*z-2*y,  x*z-3*y^2-x,  x*y^2+(5/3)*z^2-(2/3)*y*z-(5/3)*z+(2/3)*y,
%                      z^3+(9/5)*y^4-(2/5)*y*z^2-2*z^2 +(4/5)*y*z+z-(2/5)*y],
% [x^2+5*z-2*y,  x*z-x-3*y^2,  z^2+(3/5)*x*y^2-(2/5)*y*z-z+(2/5)*y],
% [z+(1/5)*x^2-(2/5)*y,  x^3+15*y^2-2*x*y+5*x],
% [y-(1/2)*x^2-(5/2)*z,  x^4+10*x^2*z+25*z^2-(4/3)*x*z+(4/3)*x],
% [y-(5/2)*z-(1/2)*x^2,  z^2+(2/5)*x^2*z+(1/25)*x^4-(4/75)*x*z+(4/75)*x],
% [z-(2/5)*y+(1/5)*x^2,  y^2-(2/15)*x*y+(1/15)*x^3+(1/3)*x]
% \end{lstlisting}

% \smallskip
% \noindent
% We have $\Delta(I) = 2\!\cdot\! 3^2\!\cdot\!5$.
% Consequently the reduction $I_p$  is
% defined for every prime~$p$ different from $2$, $3$, and $5$.
% \end{example}

\begin{example}\label{ex:BigUnivDenom}
While many ideals do have relatively small universal denominators,
a few seemingly simple ideals can have surprisingly large ones.  This
usually arises when the \groebner fan comprises many cones, which
can happen easily when there are many indeterminates.
We exhibit two examples with few indeterminates which nevertheless
have impressive denominators.

The first example in $\QQ[x,y,z]$ is the ideal $\ideal{x^2 y + x y^2
+ 1,\, y^3+x^2 z,\, z^3+x^2}$ whose universal denominator is larger
than $2 \times 10^{404}$ and has 105 distinct prime factors
(including all primes up to 100 except 79 and 89).
The \groebner fan of this ideal comprises 392 cones.

The second example is in the ring $\QQ[x,y,z,w]$: it is the apparently
innocuous ideal $\ideal{xyz+yzw+y,\, z^3+x^2,\, y^2z+w^3,\, x^3+y^3}$.
The \groebner fan of this ideal comprises almost 37000 cones, and
its universal denominator is larger than $2 \times 10^{379530}$.
This number has at least $24539$ distinct prime factors
including more than $\tfrac{2}{3}$ of all primes less than $2^{15}$;
in fact, the smallest prime not dividing the universal denominator is $4463$.
We checked the primeness of factors larger than $2^{32}$ using the function
\verb|mpz_probab_prime_p| from the GMP library, specifying $25$ iterations
of the Miller--Rabin test (see~ \cite{GMP-6.1.2}).

\end{example}

%\bigskip
%%%%%%%%%%%%%%%%%%%%%%%%%%%%%%%%
\section{Good primes vs lucky primes}
\label{sec:Good primes vs lucky primes}
%%%%%%%%%%%%%%%%%%%%%%%%%%%%%%%%

In this section we recall some notions of \textit{lucky primes} which
have a long history, and compare them with our notion of \textit{good primes}.
We restrict our attention to the case where
the ring of coefficients is~$\ZZ$, although the theory is more general (see
for instance~\cite{AL} and~\cite{Pa}).
Several results described in this subsection are known, however
we adapt them to our notation, and for some of them we provide new proofs.

In this section we fix a term ordering $\sigma$ on the monoid $\TT^n$
of the power-products in~$n$ indeterminates, consequently
we sometimes omit the symbol $\sigma$.
Computations of minimal, strong \groebner bases were performed by
\textsc{Singular} (see~\cite{Singular}).

The first important tool  is the following definition (see \cite{AL},  Definition 4.5.6).

\begin{definition}\label{def:minimalstrongGB}
  Let $g_1, \dots, g_s$ be non-zero polynomials in
  $\ZZxn$.  We say that $\GsigmaZ = \{g_1, \dots, g_s\}$ is a \define{strong
    $\sigma$-\gbasis} for the ideal~$J = \ideal{\GsigmaZ}$, if for each $f \in
  J$ there exists some $i \in\{1,\dots, s\}$ such that
  $\LM_\sigma(g_i)$ divides~$\LM_\sigma(f)$.  We say that $\GsigmaZ$ is a
  \define{minimal strong $\sigma$-\gbasis} if it is a strong $\sigma$-\groebner
  basis and $\LM_\sigma(g_i)$ does not divide $\LM_\sigma(g_j)$ whenever
  $i\ne j$.
%We say that $G$ is a \textbf{$\ZZ$-reduced  \gbasis} if it is a minimal
%strong \gbasis and  for every $g \in G$ the tail of $g$, i.e.\ the polynomial
%$g -\LM_\sigma(G)$ is interreduced with respect to $G$.
\end{definition}

\begin{remark}\label{rem:PID}
  In \cite{AL} the theory of minimal strong \groebner bases is fully developed,
  in particular it is stated that every non-zero ideal in $\ZZxn$
  has a minimal strong \gbasis (see~\cite{AL}, Exercise 4.5.9).

  It is well known that reduced \gbases have the property that the
  leading terms of their elements are pairwise distinct.  This also
  holds for minimal strong \gbases in $\ZZxn$ because the coefficient
  ring $\ZZ$ is a principal ideal domain.
\end{remark}

The following easy examples show the difference between
a minimal strong \gbasis of an ideal $J\subseteq\ZZxn$
 and the reduced \gbasis of the extended
ideal $J\ext\QQ[x_1,\ldots,x_n]$.  Note that,
whereas the elements of the reduced \gbasis are monic,
in a strong \gbasis the coefficients of the leading monomial
play an essential role in divisibility checking.

\begin{example}\label{ex:easyminstrongGB}
Let $J = \ideal{x^2,2x}$ be an ideal in $\ZZ[x]$.
Then $\GsigmaZ = \{x^2,\; 2x\}$ is a minimal strong $\sigma$-\gbasis
of $J$, while $\{x \}$ is the reduced $\sigma$-\gbasis of the
extended ideal $J\ext\QQ[x]$.

Let $\FZ=\{2x,\; 3y\} \subseteq \ZZ[x,y]$.  Then $\{2x,\; 3y,\; xy\}$ is a minimal strong $\sigma$-\gbasis
of the ideal~$\ideal{\FZ}$ for any term ordering $\sigma$, while $\{x,\, y \}$ is the reduced $\sigma$-\gbasis of the
extended ideal $\ideal{\FZ}\ext\QQ[x]$.
\end{example}

The reduced $\sigma$-\gbasis is a unique, canonical choice amongst all
{$\sigma$-\gbases}; in contrast, a minimal strong $\sigma$-\gbasis is not unique.

\begin{example}\label{ex:notuniqueminstrGB}
Let $\sigma$ be the $\degrevlex$ term ordering on $\TT^2$.
In the ring $\ZZ[x,y]$
let $\GsigmaZ = \{y^2 -x,\, 2x\}$ and $\GsigmaZPrime= \{y^2 +x,\, 2x \}$.
%\anna{$\GsigmaZ = \{y^2 -3x,\; 2x\}$ and $\GsigmaZ'= \{y^2 +5x, 2x \}$.}
Then clearly $\ideal{\GsigmaZ} = \ideal{\GsigmaZPrime}$
and both $\GsigmaZ$ and $\GsigmaZPrime$ are
minimal strong $\sigma$-\groebner bases of this ideal.
The unique reduced $\sigma$-\gbasis of the extended ideal is $G = \{ x,\,y^2 \}$.
\end{example}

% \cancel{NON SERVE
% \begin{definition}\label{def:LMandCF}
% Let $\FZ$ be a set of non-zero polynomials in $\ZZxn$.
% We define $\LM_\sigma(\FZ) =  \{\LM_\sigma(f) \mid f \in \FZ\}$ to be the set
% of leading monomials, and
% $\LC_\sigma (\FZ) =  \{\LC_\sigma(f) \mid f \in \FZ\}$ to be the set of leading
% coefficients. 
% \cancel
% {Finally, \john{if $\FZ$ is finite,} we define $\lcm_\sigma(\FZ) = \lcm\{\LC_\sigma(f) \mid f \in \FZ\}$, the least common multiple of the leading coefficients.
% } \anna{moved below}
% \end{definition}
% }

Although not unique, we shall now see that two minimal strong
$\sigma$-\groebner bases of an ideal $J$ in $\ZZxn$ share the same leading
monomials.

\begin{lemma}\label{lemma:twominimalstrongGB}
Let $J$ be an ideal in $\ZZxn$, and $\sigma$ be a term-ordering on $\TT^n$.
Let ${\GsigmaZ}$ and $\GsigmaZPrime$ be two minimal strong $\sigma$-\groebner bases of $J$.
Then
$\{\LM_\sigma(g) \mid g\in\GsigmaZ\} = \{\LM_\sigma(g') \mid g'\in \GsigmaZPrime\}$.
Consequently we have $\#\GsigmaZ = \#{\GsigmaZPrime}$ and
$\{\LC_\sigma(g) \mid g\in\GsigmaZ\} = \{\LC_\sigma(g') \mid g'\in \GsigmaZPrime \}$.
\end{lemma}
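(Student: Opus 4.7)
The plan is a classical back--and--forth argument between $\GsigmaZ$ and $\GsigmaZPrime$, driven by the strong $\sigma$-\gbasis property of each and closed off by the minimality (non--divisibility) condition together with Remark~\ref{rem:PID}.

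First, fix $g \in \GsigmaZ$ and set $m = \LM_\sigma(g)$. Since $g \in J$, the strong $\sigma$-\gbasis property of $\GsigmaZPrime$ yields some $g' \in \GsigmaZPrime$ with $\LM_\sigma(g') \divides m$. Now $g' \in J$ in turn, so applying the strong $\sigma$-\gbasis property of $\GsigmaZ$ produces $g'' \in \GsigmaZ$ with $\LM_\sigma(g'') \divides \LM_\sigma(g')$. Transitivity of divisibility then gives $\LM_\sigma(g'') \divides m = \LM_\sigma(g)$, but $g$ and $g''$ both lie in $\GsigmaZ$, so the minimality condition in Definition~\ref{def:minimalstrongGB} forces $g'' = g$. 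Consequently $\LM_\sigma(g'')=m$, and sandwiching $m \divides \LM_\sigma(g') \divides m$ (where divisibility in $\ZZxn$ is antisymmetric up to the units $\pm 1$ of $\ZZ$, which are pinned down by the common leading term via Remark~\ref{rem:PID}) yields $\LM_\sigma(g')=m$.

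So for every $g \in \GsigmaZ$ we find a matching $g' \in \GsigmaZPrime$ with the same leading monomial, establishing the inclusion $\{\LM_\sigma(g) \mid g \in \GsigmaZ\} \subseteq \{\LM_\sigma(g') \mid g' \in \GsigmaZPrime\}$; swapping the roles of $\GsigmaZ$ and $\GsigmaZPrime$ yields the reverse inclusion and hence the desired set equality. The cardinality statement $\#\GsigmaZ = \#\GsigmaZPrime$ then follows because Remark~\ref{rem:PID} tells us that inside each minimal strong \gbasis the leading terms, and therefore the leading monomials, are pairwise distinct; thus $g \mapsto \LM_\sigma(g)$ is a bijection from each basis onto the common set of leading monomials. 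The equality of the $\LC_\sigma$-sets is then just the coefficient projection of the already established equality of $\LM_\sigma$-sets.

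The main subtlety I expect is the bookkeeping between leading terms and leading monomials: divisibility of monomials in $\ZZxn$ includes a $\ZZ$-divisibility of coefficients and is antisymmetric only up to a sign, so one must argue (using Remark~\ref{rem:PID} and the convention on leading coefficients) that the squeeze $m \divides \LM_\sigma(g') \divides m$ actually forces equality, not merely agreement up to a unit. Once that point is addressed, the back--and--forth argument is entirely routine.
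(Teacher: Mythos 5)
Your back--and--forth divisibility argument is exactly the one the paper intends: its proof simply says the claim follows ``along the same lines as the uniqueness of the minimal generating set of a monomial ideal'', and you have written out precisely that argument, transported to leading monomials via the strong-basis and minimality conditions of Definition~\ref{def:minimalstrongGB}. Your flagged subtlety about the unit $\pm 1$ is real (the statement implicitly assumes a sign normalisation of leading coefficients, and is otherwise only true up to sign), but you handle it correctly and it is immaterial for the way the lemma is used later.
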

\begin{proof}
This equality can be proved
along the same lines as the proof of the uniqueness of the minimal
generating set of a monomial ideal in $K[x_1, \dots, x_n]$ where $K$
is a field~--~see for instance~\cite[Proposition~1.3.11.b]{KR1}.
\end{proof}

Given  a polynomial in $\QQxn$
we define its primitive integral part; it has
integer coefficients with no common factor, so
its modular reduction is non-zero for any prime~$p$.

\begin{definition}\label{def:IntegralPart}
Let $f$ be a non-zero polynomial in $\QQxn$, and let
$c$ be the integer content of $f {\cdot} \den(f) \in \ZZxn$.
Then the \define{primitive integral part of $f$},
denoted \define{$\prim(f)$},
is the primitive polynomial $c^{-1} f {\cdot} \den(f) \in \ZZxn$.
If $F$ is a set of non-zero polynomials in $\QQxn$
then \define{$\prim(F)$}$=\{\prim(f) \mid f {\in} F\}$.

%% Note that if $\sigma$ is any term-ordering then
%% % $\prim(f) = \delta_f \cdot \tfrac{f}{\LC_\sigma(f)}$
%% % where $\delta_f = \den \bigl(\tfrac{f}{\LC_\sigma(f)} \bigr)$.
%%  $\prim(f) = \den(f_1){\cdot}f_1$
%%  where $f_1 = \tfrac{f}{\LC_\sigma(f)}$.

For example, if $f = 2x + \tfrac{4}{3}$ then $\prim(f) = 3x+2$.
\end{definition}

\smallskip

Let $\Gsigma$ be the reduced $\sigma$-\gbasis of an ideal in $\QQxn$.  The
following theorem shows some important properties of all minimal
strong $\sigma$-\gbases of the ideal generated by $\prim(\Gsigma)$ in $\ZZxn$.

\begin{theorem}\label{thm:orderedprimF}
Let $P = \QQxn$, and $\sigma$ be a term-ordering on $\TT^n$.
Let $I$ be a non-zero ideal in $P$, and
let $\Gsigma=\{g_1,\ldots, g_r\}$ be its reduced $\sigma$-\gbasis,
whose elements are indexed so that $\LT_\sigma(g_1) <_\sigma \dots <_\sigma \LT_\sigma(g_r)$.
Then let $\GsigmaZ = \{ \tilde{g}_1,\ldots, \tilde{g}_s \}$ be a
minimal strong $\sigma$-\gbasis of the ideal~$J= \ideal{\prim(\Gsigma)} \subseteq \ZZxn$.
\begin{enumerate}
\item
  The elements in $\GsigmaZ$ can be indexed so that
  $\LT_\sigma(\tilde{g}_i) \!=\! \LT_\sigma(g_i)$ for $i \!=\! 1,\dots, r$
  while for $i \!=\! r{+}1,\dots,s$ each $\LT_\sigma(\tilde{g}_i)$ is a
  proper multiple of $\LT_\sigma(g_k)$ for some $k \le r$.

\item The subset $\{ \tilde{g}_1,\ldots, \tilde{g}_r \}$ is a minimal
  $\sigma$-\gbasis of $I$ in $P$.

\item We have %$\LC_\sigma(\prim(g_i))=\den(g_i)$ and
$\LC_\sigma(\tilde{g}_i) \mid  \LC_\sigma(\prim(g_i))$ for $i=1,\dots, r$.

\item
If there exists a prime $p$ such that $p \!\mid\! \den(g_i)$ but
$p\!\notdiv\! \den(g_j)$ for every $j \!=\! 1,\dots,i{-}1$
then $p\mid\LC_\sigma(\tilde{g}_i)$.
\end{enumerate}
\end{theorem}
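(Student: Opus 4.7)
I will use throughout that $\prim(g_i)$ is a non-zero rational multiple of $g_i$, so $JP = I$; in particular every $\tilde g_j \in J$ lies in $I$, and each $\prim(g_i)$ lies in $J$. For (a), applying the strong-\gbasis property to $\prim(g_i) \in J$ yields some $\tilde g_j$ with $\LT_\sigma(\tilde g_j) \mid \LT_\sigma(g_i)$; the reverse containment $\tilde g_j \in I$ yields $\LT_\sigma(g_k) \mid \LT_\sigma(\tilde g_j) \mid \LT_\sigma(g_i)$ for some~$k$, and irredundancy of the reduced $\sigma$-\gbasis forces $k = i$, hence $\LT_\sigma(\tilde g_j) = \LT_\sigma(g_i)$. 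The assignment $i \mapsto j$ is injective by Remark~\ref{rem:PID} (pairwise distinct leading terms in $\GsigmaZ$), so we may reindex $\GsigmaZ$ to produce the claimed matching. For $i > r$, $\tilde g_i \in I$ gives $\LT_\sigma(g_k) \mid \LT_\sigma(\tilde g_i)$ with $k \le r$, and equality would yield $\LT_\sigma(\tilde g_k) = \LT_\sigma(\tilde g_i)$, again contradicting Remark~\ref{rem:PID}.

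Parts (b) and (c) then fall out quickly. For (b), $\{\tilde g_1, \dots, \tilde g_r\} \subseteq I$ has leading terms equal to $\MinLT_\sigma(I)$, so it is a minimal $\sigma$-\gbasis of $I$. For (c), applying the strong-\gbasis property to $\prim(g_i) \in J$ gives some $\tilde g_j$ with $\LM_\sigma(\tilde g_j) \mid \LM_\sigma(\prim(g_i))$; the argument used in (a), combined with the pairwise distinctness of leading terms in $\GsigmaZ$, forces $j = i$, whence $\LC_\sigma(\tilde g_i) \mid \LC_\sigma(\prim(g_i))$.

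The main obstacle is (d), since the divisibility in (c) does not propagate to individual prime factors. I plan to argue by contradiction: assume $p \nmid c := \LC_\sigma(\tilde g_i)$, so that $c$ is a unit in $\ZZ_{\ideal{p}}$ and $f_0 := \tilde g_i / c$ lies in $I \cap \ZZ_{\ideal{p}}[x_1,\dots,x_n]$ with leading term $t_i := \LT_\sigma(g_i)$ and leading coefficient $1$. Set $h_0 := t_i - f_0 \in P$; since $\LT_\sigma(h_0) <_\sigma t_i$, the set $G_{\sigma, h_0}$ appearing in Lemma~\ref{lemma:delta_f} is a subset of $\{g_1, \dots, g_{i-1}\}$. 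The hypothesis on $p$ says every coefficient of each $g_j$ with $j < i$ has denominator coprime to $p$, and the same is true of the coefficients of $h_0$; hence one may choose $\delta$ with $p \nmid \delta$ such that $h_0$ and $G_{\sigma, h_0}$ lie in $\ZZ_\delta[x_1,\dots,x_n]$. Lemma~\ref{lemma:delta_f} then gives $\NF_{\sigma, I}(h_0) \in \ZZ_\delta[x_1,\dots,x_n]$; but $f_0 \in I$ forces $h_0 \equiv t_i \pmod I$, so $\NF_{\sigma, I}(h_0) = \NF_{\sigma, I}(t_i) = t_i - g_i$ by the defining property of $g_i$, and consequently $g_i \in \ZZ_\delta[x_1,\dots,x_n]$. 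Since $p \nmid \delta$, this contradicts $p \mid \den(g_i)$.
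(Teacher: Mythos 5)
Your proposal is correct and follows essentially the same route as the paper: parts (a)--(c) use the identical two-way divisibility argument between $\Gsigma$ and $\GsigmaZ$ together with Remark~\ref{rem:PID}, and part (d) rests on the same two ingredients as the paper's proof, namely Lemma~\ref{lemma:delta_f} applied to the low-order part of $\tilde g_i$ (whose leading term is $\sigma$-smaller than $\LT_\sigma(g_i)$, so only $g_1,\dots,g_{i-1}$ intervene) and the fact that the tail of $g_i$ is already in normal form because $\Gsigma$ is reduced. The only difference is cosmetic: you phrase (d) as a contradiction from $p\notdiv\LC_\sigma(\tilde g_i)$, whereas the paper argues directly from the identity $\NF_{\sigma,I}(\tilde h_i)=\LC_\sigma(\tilde g_i)\cdot h_i$ by comparing denominators; the two are the same computation read in opposite directions.
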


\begin{proof}
We start by proving claims~(a) and~(b).
%Let $J = \ideal{\prim(G)}$.
For each $i=1,\dots, r$ we have $\prim(g_i) \in J$, hence
there is at least one polynomial $\tilde{g}_j \in \GsigmaZ$ such that
$\LM_\sigma(\tilde{g}_j) \mid \LM_\sigma(\prim(g_i))$.
%, recalling that $\LT_\sigma(\prim(g_i)) = \LT_\sigma(g_i)$. 
Now~$\tilde{g}_j \in I$, hence
there is at least one polynomial $g_k \in \Gsigma$ such that
$\LT_\sigma(g_k) \mid \LT_\sigma(\tilde{g}_j)$.
Since $\Gsigma$ is a reduced \gbasis it follows that $k = i$, and then
also $\LT_\sigma(\tilde{g}_j) = \LT_\sigma(g_i)$.
So by suitably renumbering we may assume $j=i$.

Now we consider $i>r$.  Again we observe $\tilde{g}_i \in I$, hence
there is at least one polynomial $g_k \in \Gsigma$ such that
$\LT_\sigma(g_k) \mid \LT_\sigma(\tilde{g}_i)$.
Since $\GsigmaZ$ is minimal and $\LT_\sigma(\tilde{g}_k)=\LT_\sigma(g_k)$
we deduce from Remark~\ref{rem:PID} that $\LT_\sigma(\tilde{g}_i)$
 must be a proper multiple of $\LT_\sigma({g}_k)$.
We have now proved claims~(a) and~(b).

Next we prove claim~(c).
From claim~(a) it follows that the two polynomials
$\tilde{g}_i$ and $\prim(g_i)$
have the same leading term.
Since $\prim(g_i) \in J$
there is at least one polynomial $\tilde{g}_j \in \GsigmaZ$ such that
$\LM_\sigma(\tilde{g}_j) \mid \LM_\sigma(\prim(g_i))$.
This implies that $\LT_\sigma(\tilde{g}_j) \,| \LT_\sigma(g_i)$,
which in turn implies that $j=i$.  Hence
$\LC_\sigma(\tilde{g}_i) \,| \LC_\sigma(\prim(g_i))$.

% \cancel{
% Now, suppose for a contradiction
% that there is an index $j \in \{1, \dots, r\}$ such
% that $\LC_\sigma(\tilde{g}_j) \notdiv \LC_\sigma(\prim(g_j))$.
% From claim~(a) it follows that the two polynomials $\tilde{g}_j$ and $\prim(g_j)$
% share the same leading term.
% \john{Since $\prim(g_j) \in J$ its leading monomial must be divisible by
% $\LM_\sigma(\tilde{g}_i)$ for some $i$ since $\GsigmaZ$ is a minimal strong \gbasis;
% this implies that $\LT_\sigma(\tilde{g}_i) | \LT_\sigma(g_j)$, which in turn
% implies that $i=j$.  But we had supposed that
% $\LM_\sigma(\tilde{g}_j) \notdiv \LM_\sigma(\prim(g_j))$, hence we have a
% contradiction.}
% % \john{DELETE THIS ONE??? [[START]]
% % Let $c = \gcd(\LC_\sigma(\tilde{g}_j),\, \LC_\sigma(\prim(g_j))) \in \ZZ$, and observe that $0 < |c| < |\LC_\sigma(\tilde{g}_j)|$.
% % Let $\alpha, \beta \in \ZZ$ such that
% % $\alpha \LC_\sigma(\tilde{g}_j) + \beta \LC_\sigma(\prim(g_j)) = c$.
% % Then let $\gamma = \alpha \tilde{g}_j + \beta \prim(g_j)$.
% % Clearly $\gamma \in J$ but $\LM_\sigma(\gamma)$ is not divisible by
% % $\LM_\sigma(\tilde{g}_i)$ for any $i=1,\ldots,r$.  This contradicts the fact
% % that $\GsigmaZ$ is a strong $\sigma$-\gbasis. [[END]]}
% \cancel{hence by multiplying both polynomials with suitable integral coefficients we get a new polynomial with integral coefficients  and whose leading coefficient
% is $\gcd(\LC_\sigma(\tilde{g}_i), \LC_\sigma( \prim(g_i)))$ which contradicts the minimality of $\GsigmaZ$.}
% }

Finally, we prove claim~(d).
Let $h = \tilde{g}_i - \LC_\sigma(\tilde{g}_i) {\cdot} g_i$, and observe that $h \in I$.
Using the fact that $\LM_\sigma(\tilde{g}_i) = \LC_\sigma(\tilde{g}_i)\,{\cdot}\LT_\sigma(g_i)$ we can write
$$
h %\quad=\quad \tilde{g}_i - \LC_\sigma(\tilde{g}_i) {\cdot} g_i
  \;=\; \bigl(\tilde{g}_i - \LM_\sigma(\tilde{g}_i)\bigr) \,-\, \LC_\sigma(\tilde{g}_i){\cdot}\bigl(g_i  - \LT_\sigma(g_i)\bigr)
\;=\; \tilde{h}_i - \LC_\sigma(\tilde{g}_i){\cdot}h_i
$$
where
$\tilde{h}_i = \tilde{g}_i - \LM_\sigma(\tilde{g}_i)$ and
$       h_i =        g_i  - \LT_\sigma(g_i)$.
Now, since $h \in I$, we have
\[
0\;=\;\NF_{\sigma,I}(h) \;=\; \NF_{\sigma,I}( \tilde{h}_i) - \LC_\sigma(\tilde{g}_i)\cdot\NF_{\sigma,I}(h_i)
\]
Hence we have the equality
$\NF_{\sigma,I}( \tilde{h}_i) \,=\, \LC_\sigma(\tilde{g}_i)\,{\cdot}\NF_{\sigma,I}(h_i)$.
Given that $g_i \in \Gsigma$, the reduced $\sigma$-\gbasis of~$I$,
we have that $\NF_{\sigma,I}(h_i) = h_i$, which implies that
$\NF_{\sigma,I}( \tilde{h}_i) = \LC_\sigma(\tilde{g}_i) \cdot h_i$.

Now we look at the denominators of $\NF_{\sigma,I}(\tilde{h}_i)$
and $\LC_\sigma(\tilde{g}_i) \cdot h_i$.
Notice that $\tilde{h}_i$ has integer coefficients;
then using the fact that $\LT_\sigma(g_k) >_\sigma
\LT_\sigma(\tilde{h}_i)$ for all $k \ge i$
we can apply Lemma~\ref{lemma:delta_f} to conclude that
$\NF_{\sigma,I}(\tilde{h}_i) \in \ZZ_{\delta'}[x_1, \dots, x_n]$
where $\delta' = \lcm(\den(g_1),\dots,\den(g_{i-1}))$.
By hypothesis we know that $p \notdiv\, \delta'$,
thus $p \notdiv \den(\NF_{\sigma,I}(\tilde{h}_i))$.
Again, 
by hypothesis $p \!\mid\! \den(g_i)$.  Also, since $g_i$ is an element of a reduced \gbasis, it is monic; and the existence of $p$ implies it is not a monomial.
Thus we have $p \!\mid\! \den(h_i)$.
By the equality of the two normal forms we know that $p \notdiv \den(\LC_\sigma(\tilde{g}_i) \cdot h_i)$,
%but since $p \mid \den(h_i)$,
hence we necessarily have
$p \mid \LC_\sigma(\tilde{g}_i)$.
\end{proof}

The following example illustrates claim~(d).

\begin{example}
Let $\sigma = {\tt DegRevLex}$ on $\TT^2$ and
let $g_1 = y-\frac{1}{3},\, g_2 = x-\frac{1}{6} \in \QQ[x,y]$.
Then $\Gsigma = \{g_1, g_2\}$ is the reduced $\sigma$-\gbasis of $I =
\ideal{\Gsigma}$,
and $\GsigmaZ = \{3y-1,\, 2x-y,\,\allowbreak xy+y^2-x\}$
is a minimal strong $\sigma$-\gbasis of~$J = \ideal{\prim(\Gsigma)} \subseteq \ZZ[x,y]$
indexed according to claim~(a).
As stated in claim~(d):
\begin{itemize}
\item since $3 \mid \den(g_1)$, we therefore have $3 \mid
\LC_\sigma(\tilde{g}_1)$; indeed $\LC_\sigma(\tilde{g}_1)\,{=}\,3$.
\item since $2 \mid \den(g_2)$ and  $2 \notdiv \den(g_1)$,
we therefore have  $2 \mid \LC_\sigma(\tilde{g}_2)$; indeed $\LC_\sigma(\tilde{g}_2)\,{=}\,2$.
\end{itemize}
\end{example}

The following example illustrates the fact that simply sorting the
elements of a minimal strong \gbasis by increasing $\LT_\sigma$ may
not satisfy claim~(a).

\begin{example}
Let $P = \QQ[x,y,z]$ with term ordering $\sigma = {\tt DegRevLex}$ on $\TT^3$.
Let $g_1 = y-\frac{1}{3}$, $g_2=x-\frac{1}{2}$ and $g_3=z^3$.
Then $\Gsigma = \{g_1,g_2,g_3\}$ is the reduced \gbasis of the ideal $I = \ideal{G}$,
and we have
$\LT_\sigma(g_1) <_\sigma \LT_\sigma(g_2) <_\sigma \LT_\sigma(g_3)$.
A minimal strong \gbasis of the ideal~$J = \ideal{\prim(\Gsigma)} \subseteq \ZZ[x,y,z]$
with elements indexed according to claim~(a) is
$\GsigmaZ = \{3y-1,\, 2x-1,\, z^3, \, xy-x+y\}$,
but clearly we have $\LT_\sigma(\tilde{g}_3) >_\sigma \LT_\sigma(\tilde{g}_4)$.
\end{example}

Since the set of leading coefficients is independent of the specific
choice of minimal strong \gbasis of $J$, we make the following
definition.

\begin{definition}\label{def:lcm}
  Given a finite set $\FZ$ of non-zero polynomials in $\ZZxn$, we
  define
  \define{$\lcm_\sigma(\FZ)$}$ = \lcm\{\LC_\sigma(f) \mid f \in\FZ\}\in\ZZ$,
  the least common multiple of all the leading coefficients in $\FZ$.
  Given an ideal $J$ in $\ZZxn$ we define
  \define{$\lcm_\sigma(J)$}$= \lcm_\sigma(\GsigmaZ)$, where $\GsigmaZ$ is one of
  its minimal strong $\sigma$-\gbases.
\end{definition}

Now we apply Theorem~\ref{thm:orderedprimF} to show that the
primes appearing in $\den_\sigma(I)$
are the same as those appearing in the leading coefficients of any minimal
strong $\sigma$-\gbasis of the ideal generated by the primitive integral
parts of the reduced $\sigma$-\gbasis of~$I$.

%% \medskip
%% \renzo{Reviewer 2  appreciates Theorem~\ref{thm:Rad=Rad}  but he 
%% says that is partly known. What does he mean? 
%% }

\begin{theorem}\label{thm:Rad=Rad}
  Let $\sigma$ be a term ordering on $\TT^n\!,$ let $I$ be a non-zero
  ideal in $\QQxn$, and let $\Gsigma$ be its reduced
  $\sigma$-\gbasis.  Then
  $\rad(\den(\Gsigma)) = \rad(\lcm_\sigma(J))$ where $J=\ideal{\prim(\Gsigma)}$.
\end{theorem}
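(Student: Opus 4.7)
The plan is to establish the biconditional $p \mid \den(\Gsigma) \iff p \mid \lcm_\sigma(J)$ for every prime $p$, which is equivalent to the claimed equality of radicals. I handle the two directions separately, both as consequences of Theorem~\ref{thm:orderedprimF}. Let $\Gsigma = \{g_1, \dots, g_r\}$ and $\GsigmaZ = \{\tilde{g}_1, \dots, \tilde{g}_s\}$ be indexed as in that theorem.

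For the forward direction, suppose $p \mid \den(g_i)$ for some $i$, and let $i^*$ be the smallest such index (in the ordering of Theorem~\ref{thm:orderedprimF}(a)). Claim~(d) of that theorem immediately gives $p \mid \LC_\sigma(\tilde{g}_{i^*})$, hence $p \mid \lcm_\sigma(J)$.

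For the reverse direction I argue the contrapositive: assume $p \notdiv \den(g_j)$ for every~$j$, and show $p \notdiv \LC_\sigma(\tilde{g}_j)$ for every $j = 1, \dots, s$. A preliminary observation, used repeatedly, is that $\LC_\sigma(\prim(g_k)) \mid \den(g_k)$ for each $k$, since $g_k$ is monic (writing $\prim(g_k) = (\den(g_k)/C)\cdot g_k$ where $C$ is the integer content of $\den(g_k)\cdot g_k$, and noting $C \mid \den(g_k)$). When $j \le r$, Theorem~\ref{thm:orderedprimF}(c) then gives $\LC_\sigma(\tilde{g}_j) \mid \LC_\sigma(\prim(g_j)) \mid \den(g_j)$, and the hypothesis on $p$ does the rest.

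The main obstacle is the case $j > r$, where the ``extra'' elements of $\GsigmaZ$ are not controlled directly by Theorem~\ref{thm:orderedprimF}(c). My plan is to produce a convenient element of $J$ by combining $\tilde{g}_j$ with a multiple of $\prim(g_k)$. More precisely, by Theorem~\ref{thm:orderedprimF}(a), write $\LT_\sigma(\tilde{g}_j) = t \cdot \LT_\sigma(g_k)$ with $t \ne 1$ and $k \le r$, and set $\lambda_k = \LC_\sigma(\prim(g_k))$; by the preliminary observation, $p \notdiv \lambda_k$. Using Bezout, pick $a, b \in \ZZ$ with $a \lambda_k + b \LC_\sigma(\tilde{g}_j) = d := \gcd(\lambda_k, \LC_\sigma(\tilde{g}_j))$, and form $h = a \cdot t \cdot \prim(g_k) + b \tilde{g}_j \in J$, whose leading monomial is $d \cdot t \cdot \LT_\sigma(g_k)$. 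By the strong \gbasis property some $\tilde{g}_i \in \GsigmaZ$ must satisfy $\LM_\sigma(\tilde{g}_i) \mid \LM_\sigma(h)$, i.e., $\LT_\sigma(\tilde{g}_i) \mid t \cdot \LT_\sigma(g_k)$ and $\LC_\sigma(\tilde{g}_i) \mid d$. If $\LT_\sigma(\tilde{g}_i)$ were a proper divisor of $\LT_\sigma(\tilde{g}_j)$, then the minimality of $\GsigmaZ$ would force $\LC_\sigma(\tilde{g}_i) \notdiv \LC_\sigma(\tilde{g}_j)$, contradicting $\LC_\sigma(\tilde{g}_i) \mid d \mid \LC_\sigma(\tilde{g}_j)$. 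Hence $\LT_\sigma(\tilde{g}_i) = \LT_\sigma(\tilde{g}_j)$, whence $i = j$ by Remark~\ref{rem:PID}. Therefore $\LC_\sigma(\tilde{g}_j) \mid d \mid \lambda_k$, and $p \notdiv \lambda_k$ yields $p \notdiv \LC_\sigma(\tilde{g}_j)$, completing the proof.
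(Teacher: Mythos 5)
Your proof is correct and follows essentially the same route as the paper's: your forward direction is the paper's Claim~(2) (an immediate application of Theorem~\ref{thm:orderedprimF}(d) to the smallest index $i$ with $p \mid \den(g_i)$), and your reverse direction reproduces the paper's Claim~(1), handling indices $j \le r$ via part~(c) together with $\LC_\sigma(\prim(g_k)) \mid \den(g_k)$, and the extra elements $j > r$ by showing their leading coefficients divide ones already controlled. The only difference is that you spell out, via the Bezout/gcd argument, the divisibility $\LC_\sigma(\tilde{g}_j) \mid \LC_\sigma(\prim(g_k))$ for $j>r$ — a step the paper asserts in the equivalent form $\LC_\sigma(\tilde{g}_j) \mid \LC_\sigma(\tilde{g}_i)$ directly from minimality of the strong basis.
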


\begin{proof} Let %$I = \ideal{F}$ and let
$\GsigmaZ$ be a  minimal strong \gbasis of the ideal
$J \subseteq \ZZxn$.
The conclusion follows from the following two claims.\\
Claim (1): We have $\lcm_\sigma(J) \mid \den_\sigma(I)$ and hence
$\rad(\lcm_\sigma(J)) \mid \rad(\den_\sigma(I))$.\\
Claim (2): We have $\rad(\den_\sigma(I)) \mid \rad(\lcm_\sigma(J))$.

Let $\Gsigma = \{ g_1,\ldots,g_r\}$ indexed
so that $\LT_\sigma(g_1) <_\sigma \dots <_\sigma \LT_\sigma(g_r)$.
We shall also assume that $\GsigmaZ = \{ \tilde{g}_1,  \dots, \tilde{g}_s \}$
is indexed according to Theorem~\ref{thm:orderedprimF}.a.

Let us prove claim~(1).  From Theorem~\ref{thm:orderedprimF}.c
we get   $\LC_\sigma(\tilde{g}_i) \mid \LC_\sigma(\prim(g_i))$ for every~$i =1,\dots, r$.
Moreover, it is clear that $\LC_\sigma(\prim(g_i)) = \den(g_i)$, hence
we get $\LC_\sigma(\tilde{g}_i) \mid \den(g_i)$ for every~$i=1, \dots, r$.
Consequently, to finish the proof of claim~(1) we show that
$\lcm_\sigma(J) = \lcm_\sigma(\{\tilde{g}_1, \dots, \tilde{g}_r\})$.
Let $j$ be an index with $r+1 \le j \le s$; then by
Theorem~\ref{thm:orderedprimF}.a there exists
an index $i$ with $1 \le i \le r$ such
that $\LT_\sigma(\tilde{g}_i) \mid \LT_\sigma(\tilde{g}_j)$. Since $\GsigmaZ$
is a minimal strong \gbasis we have that
$\LC_\sigma(\tilde{g}_j) \!\mid\! \LC_\sigma(\tilde{g}_i)$.
Hence  $\lcm_\sigma(J) = \lcm_\sigma(\{\tilde{g}_1, \dots, \tilde{g}_r\})$.

Claim~(2) follows easily from Theorem~\ref{thm:orderedprimF}.d.
\end{proof}

The following example shows that in Theorem~\ref{thm:Rad=Rad}
it is not sufficient that $\Gsigma$ is just a \textit{minimal} $\sigma$-\gbasis
of $I$.

\begin{example}\label{ex:minimalnotsuff}
  Let $P = \QQ[x,y,z]$, and term ordering $\sigma = \degrevlex$ on $\TT^3$.
  Let $I = \ideal{yz-z^2,\; xy-z^2}$ be an ideal in $P$, then $\Gsigma =
  \{yz-z^2,\; xy-z^2,\; xz^2-z^3\}$ is its reduced $\sigma$-\gbasis.
  
  Let~$p$ be any prime.  The set $G_{\min} = \{yz-z^2,\; xy-z^2,\;
  xz^2-z^3+\frac{1}{p}(yz-z^2)\}$ is a minimal, but not reduced,
  $\sigma$-\gbasis of~$I$.  Clearly $\den(G_{\min}) = p$.  On
  the other hand, a minimal strong $\sigma$-\gbasis of the
  ideal $\ideal{\prim(G_{\min})}$ is $\GsigmaZ = \{yz-z^2,\; xy-z^2,\;
  xz^2-z^3\}$, hence $\lcm_\sigma(\ideal{\prim(G_{\min})})=1$.
\end{example}

The following example shows that under the assumptions of
Theorem~\ref{thm:Rad=Rad} we do not necessarily have the equality
$\den(\Gsigma) = \lcm_\sigma(\ideal{\prim(\Gsigma)})$.

\begin{example}\label{ex:RadNeeded}
Let $\sigma= \tt DegRevLex$,
let $I = \ideal{2x -y,\; 2y -z} \subseteq \QQ[x,y,z]$.
Its reduced $\sigma$-\gbasis is
$\Gsigma = \{y  -\tfrac{1}{2}z,\  x-\tfrac{1}{4}z\}$,
hence $\den(\Gsigma) = 4$.
A minimal strong \gbasis of the ideal $\ideal{\prim(\Gsigma)}$ is
$\GsigmaZ=\{ 2y -z,\  2x -y,\  xz -{y^2}^{\mathstrut}  \}$,
hence~$\lcm_\sigma(\GsigmaZ) =2$.
\end{example}

\begin{remark}
  We note that we can make claim~\ref{thm:orderedprimF}.d stronger:
  if $p$ is a prime satisfying the conditions in~\ref{thm:orderedprimF}.d
  then the greatest power of $p$ dividing $\den(g_i)$ is the same as the
  greatest power dividing $\LC_\sigma(\tilde{g}_i)$.  Observe that
  Example~\ref{ex:RadNeeded} does not contradict this stronger claim.
  % However, using the notation of Theorem~\ref{thm:orderedprimF}, this
  % does not imply that $\den(G) = \lcm_\sigma(\GsigmaZ)$ as illustrated in
  % Example~\ref{ex:RadNeeded}.
\end{remark}

\bigskip

Next we recall, using our setting and language, the definition of
lucky primes according to~\cite{Pa}.  Franz Pauer described
\textit{lucky ideals} (in~$R$) when the coefficient ring $R$ of the
polynomial ring is very general. Then he considered the case where $R$
is a principal ideal domain.  We rephrase his definition for the case
$R = \ZZ$.

\begin{definition}\label{def:PauerLucky}
  Let $\sigma$ be a term ordering on $\TT^n$, and
  let $\FZ \subseteq\ZZxn$ be a set of non-zero polynomials.
  Let $\GsigmaZ$ be a minimal strong $\sigma$-\gbasis of the ideal
  $\ideal{\FZ} \subseteq \ZZxn$.  A prime~$p$ is called
  \define{$\sigma$-Pauer-lucky for $\FZ$} (or simply
  \define{Pauer-lucky for $\FZ$} if $\sigma$ is clear from the
  context) if~$p$ does not divide the leading coefficient of any
  polynomial in $\GsigmaZ$.
\end{definition}

In~\cite[Proposition~6.1]{Pa}
Pauer proved the following relation between Pauer-lucky and
good primes.
%\john{possiamo dare un riferimento piu` preciso?  Ha scritto
%un teorema esplicito?}

\begin{proposition}\label{prop:luckyandgood}
  Let $\sigma$ be a term ordering on $\TT^n$, let $F
  \subseteq\QQxn$ be a set of non-zero polynomials, and
  let $p$ be a prime number.  If $p$ is Pauer-lucky for $\prim(F)$
  then~$p$ is $\sigma$-good for $\ideal{F}$.
\end{proposition}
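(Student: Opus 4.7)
The plan is to use Pauer-luckiness to produce a monic $\sigma$-\gbasis of $I = \ideal{F}$ whose coefficients lie in $\ZZ_\delta[x_1,\ldots,x_n]$ for some $\delta$ with $p \notdiv \delta$; extracting from it the reduced $\sigma$-\gbasis $\Gsigma$ by the usual interreduction will keep the coefficients in $\ZZ_\delta$, and so force $p \notdiv \den(\Gsigma) = \den_\sigma(I)$, which is exactly $\sigma$-goodness. Concretely, fix a minimal strong $\sigma$-\gbasis $\GsigmaZ = \{\tilde{g}_1, \ldots, \tilde{g}_s\}$ of $\ideal{\prim(F)} \subseteq \ZZxn$, set $\delta := \lcm_\sigma(\GsigmaZ)$, and observe that Pauer-luckiness gives $p \notdiv \LC_\sigma(\tilde{g}_i)$ for every $i$, hence $p \notdiv \delta$.

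Now let $h_i := \tilde{g}_i / \LC_\sigma(\tilde{g}_i)$; each $h_i$ is monic and lies in $\ZZ_\delta[x_1,\ldots,x_n]$. I claim $\{h_1,\ldots,h_s\}$ is a $\sigma$-\gbasis of $I$ in $\QQxn$: since $\ideal{\prim(F)} \cdot \QQxn = \ideal{F} = I$, any nonzero $f \in I$ admits a nonzero integer multiple $Nf \in \ideal{\prim(F)}$, and the strong \gbasis property of $\GsigmaZ$ then forces some $\LM_\sigma(\tilde{g}_i)$ to divide $\LM_\sigma(Nf) = N \cdot \LM_\sigma(f)$ in $\ZZxn$; in particular $\LT_\sigma(h_i) = \LT_\sigma(\tilde{g}_i)$ divides $\LT_\sigma(f)$ as a power-product, which is all that is required for the \gbasis property in $\QQxn$.

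The reduced $\sigma$-\gbasis $\Gsigma$ is then recovered from $\{h_i\}$ by the standard two-step interreduction: first discard each $h_i$ whose leading term is properly divided by another $\LT_\sigma(h_j)$, leaving a minimal $\sigma$-\gbasis; then iteratively subtract suitable monomial multiples of the surviving $h_j$'s to eliminate every non-leading term lying in $\LT_\sigma(I)$. Because every divisor $h_j$ is monic and lies in $\ZZ_\delta[x_1,\ldots,x_n]$, each elementary reduction subtracts a polynomial of the form $c\,t\,h_j$ with $c \in \ZZ_\delta$ and $t \in \TT^n$, and thus preserves the ring $\ZZ_\delta[x_1,\ldots,x_n]$. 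Consequently $\Gsigma \subseteq \ZZ_\delta[x_1,\ldots,x_n]$, so $p \notdiv \den(\Gsigma)$, \ie $p$ is $\sigma$-good for $I$.

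The main delicacy is the invariance claim in the last step: Lemma~\ref{lemma:delta_f} would furnish exactly what is needed, but it is stated only for reductions modulo the reduced \gbasis $\Gsigma$ itself, whereas here we reduce modulo the non-reduced basis $\{h_i\}$. However, the monic-ness of each $h_j$ trivialises the issue: at each reduction step the coefficient by which one multiplies $h_j$ is read off directly from the current polynomial (no division is required), and so already lies in $\ZZ_\delta$, making the invariance a routine inspection of the division algorithm rather than an appeal to any deeper lemma.
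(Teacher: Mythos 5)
Your proof is correct, and it supplies an argument where the paper gives none: for this proposition the paper simply cites Pauer (\cite{Pa}, Proposition~6.1). Your route --- divide each element of a minimal strong $\sigma$-\gbasis $\GsigmaZ$ of $\ideal{\prim(F)}$ by its leading coefficient to obtain a monic $\sigma$-\gbasis $\{h_i\}$ of $I$ inside $\ZZ_\delta[x_1,\dots,x_n]$ with $\delta=\lcm_\sigma(\GsigmaZ)$ and $p\notdiv\delta$, then note that interreduction against monic divisors with coefficients in $\ZZ_\delta$ never leaves $\ZZ_\delta[x_1,\dots,x_n]$ --- is sound. The two points that need checking both hold: (i) $\{h_i\}$ really is a $\sigma$-\gbasis of $I$, since every $f\in I$ has a nonzero integer multiple lying in $\ideal{\prim(F)}\subseteq\ZZxn$ and monomial divisibility in $\ZZxn$ entails power-product divisibility; and (ii) the leading terms of a minimal strong \gbasis are pairwise distinct (the paper's Remark~\ref{rem:PID}), so discarding elements whose leading terms are proper multiples and then tail-reducing does yield the reduced $\sigma$-\gbasis. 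You are also right that Lemma~\ref{lemma:delta_f} cannot be invoked directly (it reduces against $\Gsigma$ itself) and that monic-ness makes the denominator bookkeeping an elementary inspection of the division algorithm. Your argument is close in spirit to the paper's Theorem~\ref{thm:orderedprimF}(a)--(c), which matches leading terms between $\Gsigma$ and a minimal strong \gbasis, but that theorem is stated for $\ideal{\prim(\Gsigma)}$ rather than $\ideal{\prim(F)}$ and so could not simply be quoted here; your direct divisibility argument is the correct substitute.
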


The inclusion stated in this proposition can be strict,
as the following examples show.

\begin{example}\label{ex:strictinclusioncontinued2}
  Recalling Example~\ref{ex:strictinclusion} the prime~$2$ is
  good for the ideal $\ideal{F}$.  However, the minimal strong \groebner
  basis of the ideal $\ideal{\prim(F)}$ is $\{2y-2z,\; x+2z\}$, hence
  $2$ is not Pauer-lucky for $\prim(F)$.
\end{example}

\begin{example}\label{ex:GFandGZ}
Let $\sigma= \tt DegRevLex$,
and let
$F = \{x^2y - \tfrac{7}{2} y,\; xy^2 -\tfrac{3}{5}x\} \subseteq \QQ[x,y]$.
The reduced $\sigma$-\gbasis of the ideal $\ideal{F}$
is $\Gsigma = \{  xy^2 -\tfrac{3}{5}x,
\   x^2 -\tfrac{35}{6}y^2, \ y^3 -\tfrac{3}{5}y \}$.
Now we consider the two ideals
$\ideal{\prim(F)}$ and $\ideal{\prim(\Gsigma)}$ in $\ZZ[x,y]$.
A~minimal strong $\sigma$-\gbasis of $\ideal{\prim(\Gsigma)}$~is
$$\{ 6x^2-35y^2,\  5y^3-3y,\  5xy^2-3x,\  x^2y^2 -3x^2 +14y^2  \}$$
A~minimal strong $\sigma$-\gbasis of~$\ideal{\prim(F)}$~is
$$\{ 6x^2-35y^2, \  35y^3-21y,\   5xy^2-3x,
\  2x^2y-7y,\  x^2y^2-3x^2+14y^2   \}$$
Hence $\den_\sigma(\ideal{F}) =\den(\Gsigma)= \lcm_\sigma(\ideal{\prim(\Gsigma)}) = 2\cdot 3\cdot 5$, in
accordance with Theorem~\ref{thm:Rad=Rad},
while $\lcm_\sigma(\ideal{\prim(F)}) =  2\cdot 3\cdot 5\cdot 7$.
Consequently the prime $7$  is not Pauer-lucky for $\prim(F)$,
while  it is a good prime for the ideal $\ideal{F}$.
\end{example}

In view of the notion of Pauer-luckyness we can rephrase
Theorem~\ref{thm:Rad=Rad} as follows, which generalizes
the implication in Proposition~\ref{prop:luckyandgood}
(originally~\cite[Proposition~6.1]{Pa})
into an equivalence when $F$ is a reduced $\sigma$-\gbasis.

\begin{corollary}\label{cor:lucky=good}
Let $\sigma$ be a term ordering on $\TT^n$,
let $F \subseteq\QQxn$ be a set of non-zero polynomials,
let $\Gsigma$ be the  reduced $\sigma$-\gbasis of the ideal $\ideal{F}$.
%and let $\GsigmaZ$ be a  minimal strong \gbasis of the ideal
%$ \ideal{\prim(G)} \subseteq \ZZxn$.
Then a prime number $p$ is $\sigma$-Pauer-lucky for $\prim(\Gsigma)$ if and only
if it is $\sigma$-good for the ideal $\ideal{F}$.
\end{corollary}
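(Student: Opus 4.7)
The plan is to reduce the equivalence to a direct application of Theorem~\ref{thm:Rad=Rad}. Both sides of the ``iff'' are statements about which primes divide a certain integer, so I will rewrite each side as a non-divisibility condition on a single integer, and then observe that the two integers have the same radical.

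First I would unfold the definitions. By Definition~\ref{def:ugly}.b, $p$ is $\sigma$-good for $\ideal{F}$ exactly when $p \notdiv \den(\Gsigma) = \den_\sigma(\ideal{F})$. On the other side, by Definition~\ref{def:PauerLucky} together with Lemma~\ref{lemma:twominimalstrongGB} (which guarantees that the multiset of leading coefficients of a minimal strong $\sigma$-\gbasis of an ideal in $\ZZxn$ is independent of the particular choice of such a basis), $p$ is $\sigma$-Pauer-lucky for $\prim(\Gsigma)$ exactly when $p$ does not divide any $\LC_\sigma(\tilde{g})$ for $\tilde{g}$ in a minimal strong $\sigma$-\gbasis $\GsigmaZ$ of $\ideal{\prim(\Gsigma)}$; by Definition~\ref{def:lcm}, this is the same as $p \notdiv \lcm_\sigma(\ideal{\prim(\Gsigma)})$.

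Next I would invoke Theorem~\ref{thm:Rad=Rad}, which gives the key identity
\[
\rad(\den(\Gsigma)) \;=\; \rad(\lcm_\sigma(\ideal{\prim(\Gsigma)})).
\]
Since, for any positive integer $N$, a prime $p$ divides $N$ if and only if $p$ divides $\rad(N)$, this identity immediately implies that $p \divides \den(\Gsigma)$ if and only if $p \divides \lcm_\sigma(\ideal{\prim(\Gsigma)})$. Contrapositively, $p \notdiv \den(\Gsigma)$ if and only if $p \notdiv \lcm_\sigma(\ideal{\prim(\Gsigma)})$, which by the two translations above is exactly the asserted equivalence between $\sigma$-goodness for $\ideal{F}$ and $\sigma$-Pauer-luckyness for $\prim(\Gsigma)$.

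There is essentially no obstacle: once the definitions are unfolded, the whole corollary is just the radicals-equality of Theorem~\ref{thm:Rad=Rad} combined with the elementary fact that divisibility by a prime is preserved by taking radicals. The only minor point to remember is to cite Lemma~\ref{lemma:twominimalstrongGB} so that the Pauer-luckyness condition does not depend on the chosen minimal strong $\sigma$-\gbasis, and thus is properly captured by the quantity $\lcm_\sigma(\ideal{\prim(\Gsigma)})$.
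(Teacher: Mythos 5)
Your proposal is correct and matches the paper's intent exactly: the paper gives no separate proof, presenting the corollary as a direct rephrasing of Theorem~\ref{thm:Rad=Rad}, and your unfolding of the definitions plus the observation that a prime divides an integer iff it divides its radical is precisely the argument that is implicit there. The remark about Lemma~\ref{lemma:twominimalstrongGB} ensuring that Pauer-luckyness is well captured by $\lcm_\sigma(\ideal{\prim(\Gsigma)})$ is a sensible, if minor, addition.
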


We conclude the section by mentioning another important paper which deals with
a notion of lucky primes.

\begin{remark}\label{rem:Arnold}
In the paper~\cite{Ar}, Elisabeth Arnold considered the case where the polynomials in $F$
are homogeneous with respect to the standard grading,
and proves that, if~$\Gsigma$ is the reduced $\sigma$-\gbasis of $\ideal{F}$,
a prime $p$ is Pauer-lucky for $\prim(F)$ if and only
if the reduced \gbasis of $\ideal{\pi_p(\prim(F))}$ is $\pi_p(\Gsigma)$.
Moreover, this is also equivalent to $p$ being Hilbert-lucky and good for $\ideal{F}$.
\end{remark}

For a reformulation of this result and a nice example
see~\cite{BDFLP}, Theorem 5 and Example~6.

%%%%%%%%%%%%%%%%%%%%%%%%%%%%%%%%
\section{Detecting Bad Primes}
\label{sec:DetectingBadPrimes}
%%%%%%%%%%%%%%%%%%%%%%%%%%%%%%%%

With the fundamental help of Theorem~\ref{thm:samereduction}, we have
seen the nice relation between ideals generated by the reductions
modulo $p$ of two reduced \groebner bases of $I$ when~$p$ is good for both
term orderings.  But what happens when $p$ is good for one and bad for
the other?
We point out that the situation of knowing whether a prime
is good or bad for some particular term ordering does arise in some useful
circumstances: for instance, in implicitization where the
generators of the eliminating ideal (see~\cite{ABPR}) naturally form a
reduced \gbasis with respect to an elimination term ordering
for the \textit{dependent variables} (\ie~the coordinate indeterminates
to be used for expressing the implicit form).
%Example~\ref{ex:impl}.
%\anna{another paper ?}.

\medskip

In the following we shall find it convenient to order finite sets of
distinct power-products.
For this reason we introduce the following definition.

%%%%%%%%%%%%%%%%%
\begin{definition}\label{tupleofpowerproducts}
Let $\sigma$ be a term ordering on~$\TT^n$ and let $P = K[x_1, \dots, x_n]$.
\begin{enumerate}
\item A tuple $(t_1, t_2, \dots, t_r)$  of  distinct power-products  in~$\TT^n$
is called \define{$\sigma$-ordered} if
we have $t_1<_\sigma t_2 <_\sigma \cdots <_\sigma t_r$.
The empty tuple is $\sigma$-ordered.

\item Let $F$ be a set or tuple of non-zero polynomials in $P$.
The $\sigma$-ordered tuple of the interreduction
of $\LT_\sigma(F)$ is denoted by   \define{$\Os(F)$}.

\item Let $I$ be an %\cancel{non-zero}\john{?}
 ideal in $P$.  Then the $\sigma$-ordered tuple of
the leading terms of any minimal $\sigma$-\gbasis of $I$
is denoted by \define{$\Os(I)$}.  In particular, if $I$ is the zero ideal
then $\Os(I)$ is the empty tuple.
\end{enumerate}
\end{definition}

%%%%%%%%%%%%%%% 3.2
\begin{example}
Let $P = \QQ[x,y]$ and let $\sigma = \degrevlex$.
We consider the set of polynomials $F=\{x{+}y{+}1,\, x^2{+}2x{+}y{+}1,\, y^3\}$.
Observe that $\LT_\sigma(F) = \{x, x^2, y^3\}$ is not interreduced;
interreduction produces $\Os(F) = (x, y^3)$.
In contrast, working with the ideal $I = \ideal{F}$ gives $\Os(I) = (y,\,  x)$
since the reduced \gbasis is $\{x{+}1,\, y\}$.
\end{example}

%\begin{definition}\label{tupleReduced}
%
%Then let $F$ be a set of non-zero polynomials in $P$,
%and let $I$ be a non-zero ideal in $P$.
%\begin{enumerate}
%
%\item We say that the tuple $F$ is \define{$\sigma$-ordered} if the leading terms of its elements are in
%increasing $\sigma$-order.
%
%\item The $\sigma$-ordered tuple associated to $S$ is denoted by
%\define{$\Ord_\sigma(S)$}.
%
%\item
%The $\sigma$-ordered tuple of the minimal generating set of the leading term ideal
%$\LT_\sigma(I)$ is denoted by $\Os(I)$, and
%the $\sigma$-ordered tuple of the reduced
%\hbox{$\sigma$-\groebner} basis of~$I$ is denoted by~$\OrdRGB_\sigma(I)$.
%\end{enumerate}
%
%\end{definition}

We define a total ordering on the $\sigma$-ordered tuples
of distinct power-products.

\begin{definition}\label{def:orderedcomparison}
Let $\sigma$ be a term ordering on the monoid~$\TT^n$,
and let $T = (t_1, \ldots, t_r )$ and
$T' = (t'_1, \ldots, t'_{r'} )$ be $\sigma$-ordered tuples
of distinct power-products in $\TT^n$.
We say that  \define{$T'\ \sigma$-precedes~$T$} and write
 \define{$T'\prec_\sigma T$} if
either $T$ is a proper prefix of $T'$, \ie~$r<r'$
and $t_i = t'_i$ for all $i=1,\ldots,r$,
or there exists an index $k \in \{1,\dots, \min(r, r') \}$
such that $t_i = t'_i$ for every $i=1,\dots, k{-}1$ and
$t'_k <_\sigma t_k$.

We write \define{$T'\preceq_\sigma T$} to mean either
$T'\prec_\sigma T$ or $T'=T$.
\end{definition}

%%%%%%%%%%%%%%%
\begin{remark}\label{rem:partialtotal}
  We observe that ``$\sigma$-precedes'' is just the
  ``$\sigma$-lexicographical'' ordering
  on the $\sigma$-ordered tuples $(T, x^\infty )$ where $T$ is a
  $\sigma$-ordered tuple of distinct power-products, and
  $x^\infty$ is $\sigma$-greater than any power-product.
  For instance, we now easily see that every non-empty tuple $\sigma$-precedes the empty tuple.
\end{remark}

%%%%%%%%%%%%%%%
\begin{example}
  \label{ex:sigma-precedes}
Let $\sigma = \lex$ on $\TT^3$ with $x >_\sigma y >_\sigma z$.  We compare these tuples:\\
$
\begin{array}{ll}
(z,y,x) \prec_\sigma (z,y) \qquad &%\text{ \small{(proper prefix)}} &
%\text{equiv. } (x,y,z,x^\infty) \prec_\sigma (x,y,x^\infty)
\text{--- since } x <_\sigma x^\infty, \;\text{equivalently, } (z,y) \text{ is a proper prefix}\\
(z,y) \prec_\sigma (z,y^2,x) &
%\text{equiv. } (x,y,x^\infty) \prec_\sigma (x,y^2,z,x^\infty)
\text{--- since } y <_\sigma y^2\\
\end{array}
$
\end{example}

%%%%%%%%%%%%%%%%%%% 3.6
\begin{proposition}\label{prop:iprecgens}
Let $P= K[x_1, \dots, x_n]$, and $\sigma$ be
a term ordering on $\TT^n$.
Let $J$ be an ideal in~$P$, and
let $F$ be a set of non-zero polynomials in~$J$.
\begin{enumerate}
\item $\Os(J)=\Os(F)$ if and only if $F$ is a $\sigma$-\gbasis
  of~$J$.

\item $\Os(J)\prec_\sigma\Os(F)$ if $F$ is not a $\sigma$-\gbasis of~$J$.
\end{enumerate}

\end{proposition}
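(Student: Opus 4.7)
For part (a), the plan is to invoke the uniqueness of the minimal monomial generating set of a monomial ideal (as in \cite[Proposition~1.3.11.b]{KR1}, already cited in the paper). If $F$ is a $\sigma$-\gbasis of $J$, then $\langle \LT_\sigma(F)\rangle = \LT_\sigma(J)$; both the interreduction of $\LT_\sigma(F)$ and the set $\MinLT_\sigma(J)$ are this unique minimal generating set, so they coincide, and hence the $\sigma$-ordered tuples $\Os(F)$ and $\Os(J)$ are equal. The converse is immediate: $\Os(F) = \Os(J)$ forces $\langle \LT_\sigma(F)\rangle = \langle \MinLT_\sigma(J)\rangle = \LT_\sigma(J)$, so $F$ is a $\sigma$-\gbasis of $J$.

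For part (b), write $T = \Os(F) = (t_1, \ldots, t_r)$ and $T' = \Os(J) = (t'_1, \ldots, t'_{r'})$; by part (a) we already have $T \ne T'$. The idea is to produce explicitly the witness required by Definition~\ref{def:orderedcomparison}: let $k$ be the smallest position at which $T$ and $T'$ first differ, allowing for the possibility that one tuple has terminated. Two situations then need to be handled: \emph{(i)} one tuple is a proper prefix of the other, and \emph{(ii)} $k \le \min(r,r')$ with $t_k \ne t'_k$.

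In case \emph{(i)}, if $T$ is a proper prefix of $T'$ (so $r < r'$), then $T' \prec_\sigma T$ follows from the first clause of the definition. The ``wrong'' sub-case $r' < r$ with $T'$ a proper prefix of $T$ must be ruled out: the element $t_{r'+1}$ lies in $\LT_\sigma(F) \subseteq \LT_\sigma(J) = \langle T'\rangle$, so some $t'_j$ divides it; but $t'_j = t_j$ with $j \le r' < r'+1$, contradicting the interreducedness of~$T$. In case \emph{(ii)}, I will show $t'_k <_\sigma t_k$ by the same divisibility technique: since $t_k \in \langle T'\rangle$, some $t'_j$ divides $t_k$, and $j < k$ is impossible (otherwise $t'_j = t_j$ would divide $t_k$, again contradicting interreducedness of $T$). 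Hence $j \ge k$, which gives $t'_k \le_\sigma t'_j \le_\sigma t_k$, with strict inequality throughout since $t'_k \ne t_k$ by the choice of~$k$.

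The main subtlety is the interplay between the interreducedness of $T$ and the minimality of $T'$: the former forbids early matches and the latter controls the divisors of each $t_k$. Once this is in place the whole argument reduces to a short case analysis on divisibilities of leading terms against $\MinLT_\sigma(J)$, so I do not expect any hidden difficulty.
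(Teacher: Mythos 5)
Your proof is correct and follows essentially the same route as the paper's: part~(a) via the uniqueness of the minimal generating set of $\LT_\sigma(J)$, and part~(b) by locating the first index of disagreement, ruling out the wrong prefix direction via interreducedness, and using that $t_k\in\LT_\sigma(J)$ must be divisible by some $t'_j$ with $j\ge k$ to conclude $t'_k<_\sigma t_k$. No gaps.
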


\begin{proof}
By definition, $F\subseteq J$ is  a $\sigma$-\groebner
basis of $J$ if and only if $\LT_\sigma(F)$ generates~$\LT_\sigma(J)$.
Hence claim~(a) follows.
Now we prove claim~(b).

Since $F$ is not a $\sigma$-\gbasis of $J$, we have $\Os(F) \neq \Os(J)$.
If it happens that $\Os(F)$ is a proper prefix of $\Os(J)$, the conclusion follows immediately.
So we assume that $\Os(F)$ is not a proper prefix.
Note that $\Os(J)$ cannot be a proper prefix of $\Os(F)$ as otherwise this would imply
that there is~$f \in F \subseteq J$ with $\LT_\sigma(f) \notin \LT_\sigma(J)$.

Let $\Os(F) = (t_1, t_2, \dots)$,  let $\Os (J) = (t'_1, t'_2, \dots)$,
and let $k$ be the first index such that $t_k\ne t'_k$.
Since $F \subseteq J$ we know that $t_k \in \LT_\sigma(J)$,
and hence $t_k$ is a multiple of some element of $\Os(J)$.
Since $\Os(F)$ is interreduced, $t_k$ is not a multiple of any of
the other $t_j$, and thus specifically not a multiple of any of $t'_1,\ldots,t'_{k-1}$.
Hence $t_k$ can only be a non-trivial multiple of $t'_k$ or a multiple
of $t'_j$ for some index $j > k$.  Either way $t_k >_\sigma t'_k$,
and so $\Os(J) \prec_\sigma \Os(F)$ as claimed.
\end{proof}

The next example illustrates the importance of
$\Os(F)$ being interreduced.

%%%%%%%%%%%%%%
\begin{example}
Let $P = K[x,y]$ and let $\sigma {=} \degrevlex$.
Let $J = \ideal{x, y^3}$ and consider the $\sigma$-ordered
tuple $T = (x, x^2, y^3)$; the elements of $T$ are clearly non-zero
polynomials in $J$.  We observe that
$\Os(J) = \Os(T) = (x, y^3)$.
However, the tuple $T$ is not interreduced, and we have $T \prec_\sigma \Os(J)$.
\end{example}

We recall here a standard result from the theory of \gbases; for
the sake of completeness we include the proof.

%%%%%%%%%%%%%%%%%%%%%%%%%%%%%%%%%%%%%%%%%%%%
\begin{lemma}\label{lemma:LTIsubsetLTJ}
Let $P= K[x_1, \dots, x_n]$, let $\sigma$ be a term ordering on $\TT^n$, and
let $I$, $J$ be ideals in $P$.
If $I \subsetneq J$ then $\LT_\sigma(I) \subsetneq \LT_\sigma(J)$.
\end{lemma}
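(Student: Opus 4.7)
The plan is to prove the contrapositive-style statement via a standard minimal-counterexample argument. First I would observe that the inclusion $\LT_\sigma(I) \subseteq \LT_\sigma(J)$ is immediate: every non-zero $f \in I$ also lies in $J$, so its leading term generates an ideal contained in $\LT_\sigma(J)$. The content of the lemma is therefore the strictness of the inclusion when $I \subsetneq J$.

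Next I would argue by contradiction: assume $I \subsetneq J$ but $\LT_\sigma(I) = \LT_\sigma(J)$. Since $I \subsetneq J$, the set $J \setminus I$ is non-empty. Using the well-ordering property of $\sigma$ on $\TT^n$, pick an $f \in J \setminus I$ whose leading term $\LT_\sigma(f)$ is $\sigma$-minimal among all leading terms of elements of $J \setminus I$.

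The key step is to derive a contradiction by producing an element of $J \setminus I$ with a strictly smaller leading term. Since $\LT_\sigma(f) \in \LT_\sigma(J) = \LT_\sigma(I)$ and $\LT_\sigma(I)$ is a monomial ideal, there exists $g \in I \setminus \{0\}$ such that $\LT_\sigma(g)$ divides $\LT_\sigma(f)$. Let $t \in \TT^n$ and $c \in K$ be chosen so that $c \cdot t \cdot \LM_\sigma(g) = \LM_\sigma(f)$, and set $h = f - c \cdot t \cdot g$. Then $h \in J$ (since $f \in J$ and $g \in I \subseteq J$), and $h \notin I$ (since $c \cdot t \cdot g \in I$ but $f \notin I$); by construction the leading monomials cancel, so either $h = 0$ or $\LT_\sigma(h) <_\sigma \LT_\sigma(f)$. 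The case $h = 0$ gives $f = c \cdot t \cdot g \in I$, contradicting $f \notin I$; otherwise $h \in J \setminus I$ has $\LT_\sigma(h) <_\sigma \LT_\sigma(f)$, contradicting the minimality of $\LT_\sigma(f)$. Either way we reach a contradiction, so the inclusion must be strict.

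No real obstacle is expected here: this is a textbook application of the well-ordering of a term ordering together with the standard division-step construction that underlies Buchberger's algorithm. The only mild care required is to distinguish between leading terms (power products) and leading monomials (with coefficients) when arranging the cancellation, and to verify that the new element $h$ genuinely fails to lie in $I$.
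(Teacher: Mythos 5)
Your proof is correct and follows essentially the same route as the paper: both pick an $f \in J \setminus I$ with $\sigma$-minimal leading term and observe that a head-reduction of $f$ by an element of $I$ would produce an element of $J \setminus I$ with strictly smaller leading term (or land in $I$), which is impossible; the paper merely states this as ``$f$ cannot be head-reduced by any element of a $\sigma$-Gr\"obner basis of $I$'' and concludes directly that $\LT_\sigma(f) \notin \LT_\sigma(I)$, whereas you spell out the cancellation step explicitly and phrase it as a contradiction.
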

\begin{proof}
Since $I \subsetneq J$ we clearly have
$\LT_\sigma(I) \subseteq \LT_\sigma(J)$.  Let $f \in J \setminus I$
with minimal \hbox{$\sigma$-leading} term, thus
$\LT_\sigma(f) \in \LT_\sigma(J)$.  However, by the minimality of
$\LT_\sigma(f)$ we see that $f$ cannot be head-reduced
by any element of a
$\sigma$-\gbasis of $I$.  Hence we conclude that
$\LT_\sigma(f) \not\in \LT_\sigma(I)$.
\end{proof}

We are ready to prove the following interesting result.

%%%%%%%%%%%
\begin{corollary}\label{cor:inverseprec}
Let $P= K[x_1, \dots, x_n]$, let $\sigma$ be a term ordering on $\TT^n$, and
let $I$, $J$ be
ideals in $P$.
If $I\subsetneq J$ then $\Os(J) \prec_\sigma \Os(I)$.
\end{corollary}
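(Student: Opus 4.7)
The plan is to reduce the corollary to a direct application of Proposition~\ref{prop:iprecgens}(b) by choosing a cleverly selected set of polynomials in $J$, namely a minimal $\sigma$-\gbasis of $I$.

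Concretely, first I would let $G$ be a minimal $\sigma$-\gbasis of $I$ (interpreting $G = \emptyset$ in the degenerate case $I = \ideal{0}$). Since $G \subseteq I \subsetneq J$, the set $G$ consists of non-zero polynomials lying in $J$, so it is a legitimate choice for the set $F$ in Proposition~\ref{prop:iprecgens}. Next I would show that $G$ cannot be a $\sigma$-\gbasis of $J$: if it were, then $\LT_\sigma(J) = \ideal{\LT_\sigma(G)} = \LT_\sigma(I)$, contradicting Lemma~\ref{lemma:LTIsubsetLTJ}, which guarantees that $\LT_\sigma(I) \subsetneq \LT_\sigma(J)$ whenever $I \subsetneq J$. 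This is the one place where the strictness of the inclusion $I \subsetneq J$ is used in an essential way.

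Having verified the hypotheses, I would apply Proposition~\ref{prop:iprecgens}(b) to obtain $\Os(J) \prec_\sigma \Os(G)$. To finish, I would note that because $G$ is a \emph{minimal} $\sigma$-\gbasis of $I$, the set $\LT_\sigma(G)$ is already interreduced and coincides with $\MinLT_\sigma(I)$; hence $\Os(G) = \Os(I)$, and the desired inequality $\Os(J) \prec_\sigma \Os(I)$ follows. The only case requiring a separate sanity check is $I = \ideal{0}$: then $G = \emptyset$, $\Os(I)$ is the empty tuple, $J \ne \ideal{0}$ so $\Os(J)$ is non-empty, and by Remark~\ref{rem:partialtotal} every non-empty tuple $\sigma$-precedes the empty tuple. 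No genuine obstacle arises — the work has already been done in Lemma~\ref{lemma:LTIsubsetLTJ} and Proposition~\ref{prop:iprecgens}, and the real content of this corollary is simply the observation that choosing $F$ to be a minimal \gbasis of $I$ aligns $\Os(F)$ with $\Os(I)$.
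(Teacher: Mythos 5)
Your proposal is correct and follows essentially the same route as the paper's proof: both take a ($\sigma$-)\groebner basis of $I$ as the set $F$ in Proposition~\ref{prop:iprecgens}, identify $\Os(F)$ with $\Os(I)$, and use Lemma~\ref{lemma:LTIsubsetLTJ} to obtain strictness. Your version merely makes explicit the verification that $G$ fails to be a \gbasis of $J$ (and the degenerate case $I=\ideal{0}$), which the paper leaves implicit.
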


\begin{proof}
Let $\Gsigma$ be a
$\sigma$-\gbasis of $I$.
Thus $\Gsigma$ is a  set of non-zero polynomials in~$J$.
By Proposition~\ref{prop:iprecgens}
we have $\Os(J)\preceq_\sigma\Os(\Gsigma) = \Os(I)$.
From Lemma~\ref{lemma:LTIsubsetLTJ} and the assumption that $I\subsetneq J$
the conclusion follows.
\end{proof}

Next we prove another useful result.

%%%%%%%%%%%%%%%%%%%%%%%%%%%%%%%%%%%%%%%%%%%%%%%%
\begin{lemma}\label{lemma:lessthan} % Lemma 3.10
Let $\sigma$ be a term ordering on $\TT^n$.
Let $T = (t_1, t_2, \dots, t_r)$ be an interreduced $\sigma$-ordered
tuple of elements in $\TT^n$, and let
$T'$ be another set of elements in~$\TT^n$.
Assume that there exist $t' \in T'$ and an index $k$ 
such that:
\[ 
\bullet\; t_1,\ldots,t_{k-1} \in T' \qquad
\bullet\; t_k >_\sigma t' \qquad
\bullet\; t' \text{ is not divisible by any }t_i\in T
\]
Then $\Os(T') \prec_\sigma T$, and $T$ is not a proper prefix of $\Os(T')$.
\end{lemma}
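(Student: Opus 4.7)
My plan is to denote $\Os(T') = (u_1, \dots, u_m)$, compare it position by position with $T = (t_1, \dots, t_r)$, and locate the smallest index $j^{*}$ of disagreement; the goal is to show $j^{*} \le k$ and $u_{j^{*}} <_\sigma t_{j^{*}}$, which by Definition~\ref{def:orderedcomparison} yields $\Os(T') \prec_\sigma T$.

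The basic tool is the observation that for every $s \in T'$ there exists $\tau \in \Os(T')$ with $\tau \divides s$. Indeed, the $\sigma$-minimum of the non-empty finite set $\{s' \in T' \SuchThat s' \divides s\}$ is itself minimal for divisibility in $T'$ (any strict divisor in $T'$ would be $\sigma$-smaller, since $\sigma$ respects divisibility), hence lies in $\Os(T')$. Applied to the distinguished element $t'$, this produces $s^{*} \in \Os(T')$ with $s^{*} \divides t'$ and $s^{*} \le_\sigma t' <_\sigma t_k$. Crucially, $s^{*} \notin T$: if $s^{*}$ were some $t_i$, then $t_i \divides t'$, contradicting the third hypothesis.

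The induction runs as follows. Assuming $u_1 = t_1, \dots, u_{i-1} = t_{i-1}$, I handle the step $i \le k - 1$ by applying the basic tool to $t_i \in T'$ to obtain $\tau \in \Os(T')$ with $\tau \divides t_i$; interreducedness of $T$ forbids $\tau = t_j$ for any $j < i$, so $\tau = u_{j''}$ with $j'' \ge i$, and hence $u_i \le_\sigma \tau \le_\sigma t_i$. If this inequality is strict I stop with $j^{*} = i$; otherwise $u_i = t_i$ and the induction continues. At the terminal step $i = k$, I cannot apply the tool to $t_k$ (which need not lie in $T'$), so I instead use $s^{*}$: it lies in $\Os(T')$ and is distinct from each $t_j$ with $j < k$, hence equals $u_{j''}$ for some $j'' \ge k$, yielding $u_k \le_\sigma s^{*} <_\sigma t_k$ strictly. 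Thus the induction always terminates with $u_{j^{*}} <_\sigma t_{j^{*}}$ for some $j^{*} \le k$, proving $\Os(T') \prec_\sigma T$; this same strict inequality $u_k <_\sigma t_k$ simultaneously rules out the possibility that $T$ is a proper prefix of $\Os(T')$, since that would force $u_k = t_k$.

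The main obstacle is the terminal step $i = k$: because $t_k$ may not belong to $T'$, the straightforward tool applied to $t_i$ breaks down, and the auxiliary element $s^{*}$ extracted from $t'$ is needed precisely to close the argument here. The remaining bookkeeping is routine.
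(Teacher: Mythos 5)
Your proof is correct. The one cosmetic slip is in the final sentence: if the induction terminates early at some $j^{*}<k$, the strict inequality you have in hand is $u_{j^{*}}<_\sigma t_{j^{*}}$ rather than $u_k<_\sigma t_k$; but since a proper prefix would force $u_i=t_i$ for \emph{every} $i\le r$, the first-disagreement inequality at position $j^{*}\le k\le r$ kills that possibility in either case, so nothing is lost. Your route is genuinely different in organization from the paper's. The paper takes $\tmin$ to be the $\sigma$-minimum of the elements of $T'$ not divisible by any $t_i\in T$, lets $j$ be the first index with $t_j>_\sigma\tmin$, and then shows that the explicit tuple $\overline{T'}=(t_1,\dots,t_{j-1},\tmin)$ is an actual prefix of $\Os(T')$; this requires a separate verification that every element of $T'$ is either $\sigma$-greater than $\tmin$ or a multiple of something in $\overline{T'}$, but in return it pins down an initial segment of $\Os(T')$ exactly. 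You instead run a positional induction comparing $u_i$ with $t_i$, powered by the observation that every element of $T'$ is divisible by some element of the interreduction (your ``basic tool''), and you only ever need the one-sided bounds $u_i\le_\sigma t_i$ plus the single strict bound $u_{j^{*}}\le_\sigma s^{*}<_\sigma t_k$ coming from a divisibility-minimal divisor $s^{*}$ of $t'$, which cannot lie in $T$ by the non-divisibility hypothesis. Your argument is somewhat more lightweight (no prefix verification, no exact identification of the entries of $\Os(T')$), while the paper's yields slightly more structural information about $\Os(T')$; both rest on the same two facts, namely that term orderings refine divisibility and that interreduction retains a divisor of every element.
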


\begin{proof}
Let $\tmin = \min_\sigma \{ \tilde{t}\in T'
 \SuchThat \tilde{t} \text{ not divisible  by any } t_i\in T \}$,
and let $j$ be the smallest index such that  $t_j >_\sigma \tmin  $.  
These definitions imply that $\tmin  >_\sigma t_{j-1}$, and also $j \le k$, so we
know that $t_1,\ldots,t_{j-1} \in T'$.

Now we define the tuple $\overline{T'} =
(t_1,\dots,t_{j-1}, \tmin)$, which is clearly $\sigma$-ordered.  
Furthermore, we have that $\overline{T'} \prec_\sigma T$ because
 $t_j >_\sigma \tmin$ by construction.

Next we prove that 
$\overline{T'}$ is a prefix of $\Os(T')$,
and therefore satisfies
$\Os(T') \preceq_\sigma\overline{T'} \prec_\sigma T$.

The set of power-products in $\overline{T'}$ is interreduced: we
already know that $\{ t_1, \dots, t_{j-1}\}$ is interreduced, and
$\tmin$ is not divisible by any of them; on the other hand, we see
that~$\tmin$ cannot divide any of them because it is the
$\sigma$-greatest element.
%We conclude by proving that $\Os(T') \preceq_\sigma \overline{T'}$. 
%To do this
%Now we show that $\overline{T'}$ is a prefix of $\Os(T')$.
%We have already seen that $\overline{T'}$ is an interreduced subset of $T'$,

Now, it suffices to show that each element of $T'$ (or, equivalently,
of $T' \setminus \overline{T'}$)
is either $>_\sigma\tmin$
or a multiple of an element of $\overline{T'}$.
Let $s' \in T'$; we shall argue depending on whether $s'$ is divisible by some element of the tuple $T$.
First we consider the case where $s'$ is not divisible by any $t_i \in T$.
By definition of $\tmin$ we see that $s' \ge_\sigma \tmin$;
if $s' = \tmin$ it is trivially a multiple of an element of $\overline{T'}$,
otherwise $s' >_\sigma \tmin$ as claimed.
We address now the case where $s'$ is a multiple of some $t_i \in T$.
If $i < j$ then~$s'$ is clearly a multiple of an element of $\overline{T'}$;
otherwise $i \ge j$, so $s' \ge_\sigma t_i \ge_\sigma t_j >_\sigma \tmin$.

In conclusion, $\Os(T') \prec_\sigma T$,
and $T$, not containing $\tmin$, is not a proper prefix of $\Os(T')$.
\end{proof}

The following example illustrates the steps in this proof.

%%%%%%%%%%%
\begin{example}\label{ex:proof}
Let $\sigma = \degrevlex$.
Consider the interreduced $\sigma$-ordered tuple $T$
and the set $T'$:
\begin{center}
$T =       (xyz, x^3, \BOX{x^2y^2}, xz^4, y^6, z^7)$
\\
$T' = \{xyz, x^3, x^2z^2, \BOX{xy^2}, y^7, x^2y^8\}$.
\end{center}
We take $k = 3$, so $t_k = x^2y^2$, and $t'=xy^2$,
which is not a multiple of any power-product in $T$:
these choices satisfy the hypotheses of the lemma.
Following through the proof we have $\tmin=t'$, $j=2$ and $\overline{T'} = (xyz,
\BOX{xy^2})$, and we see clearly that $\overline{T'}\prec_\sigma T$.
We compute the tuple $\Os(T') = (xyz, \BOX{xy^2}, x^3, x^2z^2, y^7)$,
and observe that $\overline{T'}$ appears as a (proper) prefix.
Consequently $\Os(T')\prec_\sigma \overline{T'}\prec_\sigma T$.
\end{example}

The following easy example shows the importance of
the non-divisibility assumption in the lemma.

%%%%%%%%%%%
\begin{example}\label{ex:bessential}
Let $\sigma = \degrevlex$ and let $T = (x, y^3)$, an interreduced $\sigma$-ordered tuple.
Now let
$T' = \{x, x^2, y^4, z^4\}$.
For $k=1$
there is no $t' \in T'$ with $t' <_\sigma t_k$;
and for $k=2$ the only elements of $T'$ which are $\sigma$-less-than $t_k$ are
$x$ and $x^2$, but both are divisible by $t_1$.  So we cannot apply the lemma.
Indeed, $\Os(T') = (x, y^4, z^4)$, and we have $T \prec_\sigma \Os(T')$.
\end{example}

Now we are ready to prove the main theorem of this section.

%%%%%%%%%%%%%%%%%%%%%%%%%%%%%%%%%%%%%%%%%%%%%%%%%%%%%%%%%%%%%%%%%%
\begin{theorem}\label{thm:sigma-tau}
Let $P=\QQxn$, let $\sigma$, $\tau$ be two term orderings
on $\TT^n$, let~$I$ be a non-zero ideal in $P$, and
let $p$ be a prime which is  $\sigma$-good for $I$.
\begin{enumerate}
\item If $p$ is $\tau$-good for $I$,
we have ${\Ot( I_{(p,\,\sigma)})}^{\mathstrut}= \Ot(I)$.

\item If $p$ is $\tau$-bad for $I$,
we have $\Ot( I_{(p,\,\sigma)}) \prec_\tau \Ot(I)$,
and also that $\Ot(I)$ is not a proper prefix of $\Ot( I_{(p,\,\sigma)}) $.
\end{enumerate}
\end{theorem}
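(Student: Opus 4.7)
For part (a), since $p$ is both $\sigma$- and $\tau$-good for $I$, Theorem~\ref{thm:samereduction}.a gives $I_{(p,\sigma)}=I_{(p,\tau)}$. Applying Remark~\ref{rem:sameLT} to the ordering $\tau$ then yields $\MinLT_\tau(I)=\MinLT_\tau(I_{(p,\tau)})=\MinLT_\tau(I_{(p,\sigma)})$, whence $\Ot(I_{(p,\sigma)})=\Ot(I)$.

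For part (b), let $\Gtau=(g_1,\ldots,g_r)$ be the reduced $\tau$-\gbasis of $I$ indexed so that $t_j=\LT_\tau(g_j)$ satisfies $t_1<_\tau\cdots<_\tau t_r$, and let $i$ be the smallest index with $p\mid\den(g_i)$, which exists because $p$ is $\tau$-bad. The plan is to apply Lemma~\ref{lemma:lessthan} with $T=\Ot(I)$, $T'=\LT_\tau(I_{(p,\sigma)})\subseteq\TT^n$, and $k=i$. Since $T'$ is a monoid ideal, its interreduction coincides with $\MinLT_\tau(I_{(p,\sigma)})$, so the lemma's $\Ot(T')$ equals $\Ot(I_{(p,\sigma)})$ and its two conclusions match precisely the two statements to prove.

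To verify the hypotheses, observe that for $j<i$ we have $p\nmid\den(g_j)$, so Theorem~\ref{thm:GensAndRGB}.b places $\pi_p(g_j)\in I_{(p,\sigma)}$; since $g_j$ is monic, $\LT_\tau(\pi_p(g_j))=t_j$, and hence $t_1,\ldots,t_{i-1}\in T'$. As the witness term smaller than $t_i$ I would take $t'=\LT_\tau(\pi_p(\prim(g_i)))$: a short $p$-adic valuation computation using that $g_i$ is monic but $p\mid\den(g_i)$ shows $p\mid\LC_\tau(\prim(g_i))$, so $\pi_p$ kills the leading monomial of $\prim(g_i)$; primitivity of $\prim(g_i)$ guarantees $\pi_p(\prim(g_i))\neq 0$, so $t'<_\tau t_i$ and $t'\in T'$ by another application of Theorem~\ref{thm:GensAndRGB}.b.

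The delicate step, and the main obstacle I expect, is showing that $t'$ is not divisible by any $t_j\in T$. A tempting alternative would be to top-reduce $\pi_p(\prim(g_i))$ further using the $\pi_p(g_j)$ for $j<i$, but this carries the real risk of collapsing to $0$. Instead I would exploit the reducedness of $\Gtau$: no term of $g_i$, and therefore no term of $\prim(g_i)$, is divisible by $t_j$ for any $j\neq i$, and $t'$ is such a term. Divisibility of $t'$ by $t_i$ itself is ruled out by $t'<_\tau t_i$, since any non-trivial multiple of $t_i$ is $\tau$-larger than $t_i$. All hypotheses of Lemma~\ref{lemma:lessthan} therefore hold, delivering both $\Ot(I_{(p,\sigma)})\prec_\tau\Ot(I)$ and the fact that $\Ot(I)$ is not a proper prefix of $\Ot(I_{(p,\sigma)})$.
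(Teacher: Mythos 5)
Your proof is correct and follows essentially the same route as the paper: part (a) via Theorem~\ref{thm:samereduction} and Remark~\ref{rem:sameLT}, and part (b) via Lemma~\ref{lemma:lessthan} with the witness term $t'=\LT_\tau(\pi_p(\prim(g_i)))$, whose non-divisibility by the elements of $\Ot(I)$ is deduced from the reducedness of $\Gtau$ exactly as in the paper. The only (harmless) packaging difference is that you apply the lemma directly to the infinite monoid ideal $\LT_\tau(I_{(p,\sigma)})$, whereas the paper applies it to the finite set $\{\LT_\tau(\pi_p(\prim(g_1))),\dots,\LT_\tau(\pi_p(\prim(g_r)))\}$ and then passes to $\Ot(I_{(p,\sigma)})$ via Proposition~\ref{prop:iprecgens}.
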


\begin{proof}
Let $\Gtau$ be the reduced $\tau$-\gbasis of $I$, and $\Gsigma$
be the reduced $\sigma$-\gbasis for $I$.

We start by proving claim~(a).
By hypothesis, $p$ is both $\sigma$-good and $\tau$-good for~$I$,
hence Theorem~\ref{thm:samereduction}.\ref{item:tau} implies that the reduced
$\tau$-\groebner  basis of $I_{(p,\,\sigma)}$ is $\pi_p(\Gtau)$ which
has the same leading terms as $\Gtau$, and the conclusion follows immediately
from Remark~\ref{rem:sameLT}.b.

Now we prove claim~(b).
Let $\Gtau =\{g_1,\dots,g_r\}$ where the elements are indexed so that
$\LT_\tau(g_i) <_\tau \LT_\tau(g_{i+1})$ for $i = 1, \dots,r{-}1$.
For $i = 1, \dots,r$,
let $\tilde{g}_i = \prim(g_i)$ so in particular
$\pi_p(\tilde{g}_i)\ne 0$.
Define the following $\tau$-ordered tuple $T$ and set $T'$
\begin{center}
$
\begin{array}{cccccccccc}
T &= &(&\LT_\tau(g_1)&,&\dots&,& \LT_\tau(g_r)&) &\text{ which is just }\Ot(I)\\
T'&= &\{&\LT_\tau(\pi_p({\tilde{g}_1}))&,&\dots&, &
\LT_\tau(\pi_p({\tilde{g}_r}))&\}\\
\end{array}
$
\end{center}
By definition of $\pi_p$ we have $\LT_\tau(\pi_p(\tilde{g}_i))  \le_\tau \LT_\tau(g_i)$ for all $i$.
Since $p$ is $\tau$-bad, there is at least one index $j$ such that
$p$ divides the
denominator of $g_j \in \Gtau$,
hence it divides also the leading coefficient of ${\tilde{g}_j}$.
Therefore
$\LT_\tau(\pi_p({\tilde{g}_j}))  <_\tau \LT_\tau(g_j)$;
let $k$ be the smallest such index.
Moreover, since $\Gtau$ is a reduced \gbasis,
$\LT_\tau(\pi_p({\tilde{g}_k}))$ is not a %}
multiple of any element of $\LT_\tau(\Gtau)$.
Therefore we can apply Lemma~\ref{lemma:lessthan} to $T$ and $T'$
with the above value of $k$
and deduce that $\Ot(T')\prec_\tau T = \Ot(I)$, and that $\Ot(I)$
is not a proper prefix of $\Ot(T')$.

Let $F = \{\pi_p({\tilde{g}_1}),\dots, \pi_p({\tilde{g}_r})\}$.
By Lemma~\ref{lemma:delta_f}.b we deduce that
$F\subseteq I_{(p,\,\sigma)}$.  Hence
Proposition~\ref{prop:iprecgens} implies that
$\Ot(I_{(p,\,\sigma)}) \preceq_\tau \Ot(F) = \Ot(T')$.

Combining the two inequalities, the conclusion follows.
\end{proof}

This theorem enables us to
detect some bad primes without having to compute the
reduced $\tau$-\gbasis of $I$
over the rationals.

% \renzo{Reviewer 1
% says that Corollary~\ref{cor:smallerisbad} detects only a subset of bad primes.
% Then he claims that it is a slight improvement of the fact that if $I\mod p \ne I\mod q$
% then at least one of the two primes is bad. What happens if $I\mod p = I\mod q$?\\
% \textit{Suppose  $I \mod p$ means $\ideal{\pi_p(\prim{G})}$. In this case it is clearly possible that 
% we have $\ideal{\pi_p(\prim{G})} = \ideal{\pi_q(\prim{G})}$ with both $p,q$ bad.
% For instance let $G = \{x-\tfrac{1}{6}y\}$. Then both 2 and 3 are bad, but 
% $\ideal{\pi_2(\prim{G})}=\ideal{\pi_3(\prim{G})} =\ideal{y}$. So what?
% }}
%%%%%%%%%%%%%%%%% 3.14
\begin{corollary}\label{cor:smallerisbad}
Let $P=\QQxn$, let $\sigma$ and $\tau$ be two term orderings
on $\TT^n$, and let~$I$ be a non-zero ideal in $P$.
Let $p$ and $q$ be $\sigma$-good primes for $I$.\\
If $\Ot( I_{(q,\,\sigma)}) \prec_\tau {\Ot(I_{(p,\,\sigma)})}$
then $q$ is $\tau$-bad for $I$.
\end{corollary}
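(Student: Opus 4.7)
The plan is to argue by contradiction, using Theorem~\ref{thm:sigma-tau} to control the tuples $\Ot(I_{(p,\sigma)})$ and $\Ot(I_{(q,\sigma)})$ under the assumption that $q$ is $\tau$-good.

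Suppose, for contradiction, that $q$ is $\tau$-good for $I$. Then $q$ is both $\sigma$-good and $\tau$-good, so by Theorem~\ref{thm:sigma-tau}.(a) we have the equality
\[
\Ot(I_{(q,\sigma)}) \;=\; \Ot(I).
\]
I would then split on whether $p$ is $\tau$-good or $\tau$-bad for $I$. If $p$ is $\tau$-good, Theorem~\ref{thm:sigma-tau}.(a) applied to $p$ gives $\Ot(I_{(p,\sigma)}) = \Ot(I) = \Ot(I_{(q,\sigma)})$, directly contradicting the hypothesis $\Ot(I_{(q,\sigma)}) \prec_\tau \Ot(I_{(p,\sigma)})$. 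If instead $p$ is $\tau$-bad, Theorem~\ref{thm:sigma-tau}.(b) yields $\Ot(I_{(p,\sigma)}) \prec_\tau \Ot(I) = \Ot(I_{(q,\sigma)})$, which again contradicts the hypothesis since $\prec_\tau$ is a strict (antisymmetric) order on $\tau$-ordered tuples, as observed in Remark~\ref{rem:partialtotal}.

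Either alternative yields a contradiction, so $q$ must be $\tau$-bad for $I$, as claimed. I do not anticipate any genuine obstacle: the real work has already been done in Theorem~\ref{thm:sigma-tau}, and this corollary is essentially a packaging of its two cases into a usable criterion. The only minor point to be careful about is to invoke the antisymmetry of $\prec_\tau$ explicitly, so that the two comparisons $\Ot(I_{(q,\sigma)}) \prec_\tau \Ot(I_{(p,\sigma)})$ and $\Ot(I_{(p,\sigma)}) \preceq_\tau \Ot(I_{(q,\sigma)})$ cannot coexist.
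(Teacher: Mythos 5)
Your proposal is correct and follows essentially the same route as the paper: both arguments reduce to Theorem~\ref{thm:sigma-tau} plus the fact that $\prec_\tau$ is a strict total order. The paper merely packages your two cases for $p$ into the single statement $\Ot(I_{(p,\sigma)}) \preceq_\tau \Ot(I)$ and then uses transitivity to get $\Ot(I_{(q,\sigma)}) \prec_\tau \Ot(I)$, contradicting part~(a) for $q$; this is the same contradiction you reach via antisymmetry.
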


\begin{proof}
By Theorem~\ref{thm:sigma-tau} we know that
$\Ot( I_{(p,\,\sigma)}) \preceq_\tau \Ot(I)$.
Hence $\Ot( I_{(q,\,\sigma)}) \prec_\tau {\Ot(I)}$,
so Theorem~\ref{thm:sigma-tau}(a) implies that the prime $q$ is
$\tau$-bad for~$I$.
\end{proof}

\begin{example}\label{ex:manybadprimes}
In the polynomial ring $\QQ[x,y,z]$ with term ordering $\sigma = \degrevlex$,
let $F = \{x^2 y +7xy^2 -2,\;  y^3 +x^2z,\;  z^3 +x^2 -y\}$
and let $I = \ideal{F}$. It turns out that all primes are $\sigma$-good for $I$,
and we have
$$\Os(I) =   \Os(I_{(p,\sigma)})  = (z^3, \; y^3,\;  x^2y,\;  x^4z, \; x^6)
\hbox{\quad for all primes\ } p.$$
Now we consider the term ordering $\tau = \tt Lex$. It turns out that
the set of $\tau$-bad primes for $I$
is $S=\{2, \;  7, \;  11, \;  55817,\; p\}$ where $p \,\approx 1.8 \times 10^{65}$.
We have
$$\Ot(I) =  \Ot(I_{(p,\tau)}) = ( z^{26},\;  y, \; x)  \hbox{\quad for all primes\  } p \not\in S.$$
For the bad primes we obtain:\\
$\bullet$ \quad $\Ot(I_{(2,\sigma)}) = ( z^{17},\;  yz,\;   y^3, \;  xz^6, \;  xy^2, \;  x^2)$\\
$\bullet$ \quad $\Ot(I_{(7,\sigma)}) = ( z^{13},\;   y,\;   x^2)$\\
$\bullet$ \quad $\Ot(I_{(11,\sigma)}) = \Ot(I_{(55817,\sigma)}) = \Ot(I_{(p,\sigma)}) = ( z^{25},\;   yz,\;   y^2,\;   x)$
%\john{curioso che $11$ e $55817$ siano ``compatibilmente cattivi''}
\end{example}

%%%%%%%%%%%\medskip
To conclude the paper we show an example illustrating
the merits of Corollary~\ref{cor:smallerisbad}.

\begin{example}\label{ex:badprimedetection}
Let $P = \QQ[x,y,z,w,s,t]$, let $\sigma$ be a term ordering
where each of $x, y, z, w$ is $\sigma$-greater than every
power product in $s$ and $t$,
and let $\tau$ be any elimination ordering
for $[s,t]$ which restricts to $\degrevlex$ on $\TT(x, y, z, w)$.
Let $f_1= t^3$, $f_2 = st^2 -2s^2$, $f_3 = s^2t -5$, $f_4 = s^3 -7t$,
and let $J = \ideal{x-f_1, \ y-f_2,\ z-f_3, \ w-f_4}$.
The given generators of $J$ form a reduced $\sigma$-\gbasis; so
clearly every prime is $\sigma$-good for $J$.
However, we do not know which ones are $\tau$-good.

Let us now look at $\Ot(I_{(p,\sigma)})$ for the primes $2,3,5$ and $7$.

\smallskip
For $p=2$ we compute $\Ot(I_{(p,\sigma)})$ obtaining the following tuple
{\footnotesize $$[y^2,\, z^5,\, yz^4,\, ty,\, sy,\, tx,\, sx,\, tz^3,\, sz^3,\, t^2,\, stz,\, s^2z,\, s^2t,\, s^3]$$}
\noindent
The best (and only!) tuple we have seen so far is $\Ot(I_{(2,\sigma)})$.
%%%Clearly  $\Ot(I_{(2,\sigma)}) \prec_\tau \Ot(I_{(101,\sigma)})$
%%%hence $2$ is a $\tau$-bad prime for $J$.

\medskip
%%%%We observe a more interesting situation for $p=3$.
For $p=3$ we compute $\Ot(I_{(p,\sigma)})$ to be the following
{\footnotesize
\begin{align*}
&[ z^5,\, yz^4,\, y^2z^3,\, y^3z^2,\, xy^2z^2,\, y^4z,\, xy^3z,\, y^5,\, xy^4,\, y^4w^2,\,
\BOX{xz^3w^3}\, ,\,  xz^4w^2,\, xyz^3w^2,\, x^2z^3w^2,\, x^2z^4w,\, \\
& x^2yz^3w,\, x^3z^3w,\, sz,\, sy,\, sx,\, tz^2,\, tyz,\, txz,\, ty^2,\, txy,\, tx^2,\, tw^3,\, sw^3,\, tzw^2,\, tyw^2,\, txw^2,\, t^2,\, st,\, s^2]
\end{align*}
}
If we compare $\Ot(I_{(3,\sigma)})$ with the best tuple, namely $\Ot(I_{(2,\sigma)})$,
we see from the very first elements of the tuples
that $\Ot( I_{(2,\,\sigma)}) \prec_\tau {\Ot(I_{(3,\,\sigma)})}$.
So the prime $2$ is surely $\tau$-bad.
And the best tuple we have seen so far is now $\Ot(I_{(3,\sigma)})$.

% we observe that the only difference is the boxed entries.
%Since $xz^3w^3 <_\tau y^2z^2w^3$ we see that
%$\Ot(I_{(3,\sigma)}) \prec_\tau \Ot(I_{(101,\sigma)})$
%and hence also $3$ is a $\tau$-bad prime for $J$.
\smallskip
Next we choose $p = 5$.
For $\Ot(I_{(p,\sigma)})$ we get the following tuple
{\footnotesize
\begin{align*}
&[ z^5,\, yz^4,\, y^2z^3,\, y^3z^2,\, xy^2z^2,\, y^4z,\, xy^3z,\, y^5,\, xy^4,\, y^4w^2,\,
\BOX{y^2z^2w^3}\, ,\, xz^4w^2\, ,\, xyz^3w^2,\, x^2z^3w^2,\, x^2z^4w,\, \\
& x^2yz^3w,\, x^3z^3w,\, sz,\, sy,\, sx,\, tz^2,\, tyz,\, txz,\, ty^2,\, txy,\, tx^2,\, tw^3,\, sw^3,\, tzw^2,\, tyw^2,\, txw^2,\, t^2,\, st,\, s^2]
\end{align*}
}
If we compare $\Ot(I_{(5,\sigma)})$ with the best tuple seen so far
%namely $\Ot(I_{(3,\sigma)})$,
we see that the tuples agree up to the boxed elements, but
then we find that $\Ot( I_{(3,\,\sigma)}) \prec_\tau {\Ot(I_{(5,\,\sigma)})}$.
So the prime $3$ is surely $\tau$-bad, and we have a new best tuple,
namely $\Ot(I_{(5,\,\sigma)})$.
% we observe that the only difference is the boxed entries.
%Since $xz^3w^3 <_\tau y^2z^2w^3$ we see that
%$\Ot(I_{(3,\sigma)}) \prec_\tau \Ot(I_{(101,\sigma)})$
%and hence also $3$ is a $\tau$-bad prime for $J$.

\smallskip
For the prime $p=7$ we compute the tuple
$\Ot(I_{(p,\sigma)})$ to be the following
{\footnotesize $$[z^3,\, y^2z^2,\, y^3z,\, y^4,\, sz,\, sy,\, sx,\, tw^2,\, sw^2,\, tzw,\, tz^2,\, tyz,\, ty^2,\, t^2w,\, stw,\, s^2w,\, t^3,\, st^2,\, s^2t,\, s^3]$$}
\noindent
If we compare $\Ot(I_{(7,\sigma)})$ with the best tuple seen so far,
just from comparing the very first elements, we find that $\Ot( I_{(7,\,\sigma)}) \prec_\tau {\Ot(I_{(5,\,\sigma)})}$.
So the prime $7$ is surely $\tau$-bad, and $\Ot(I_{(5,\,\sigma)})$ remains the best tuple.

\smallskip
If we try further primes, we find that they produce tuples equal to $\Ot(I_{(5,\,\sigma)})$.
At this point we are inclined to believe that $5$ is a good prime, but have no actual proof of this.

\smallskip
For this very small example, we can just
compute directly the reduced $\tau$-\gbasis
of~$J$; this will then confirm that $5$ is indeed good.
% shows that $\den_\tau(J) = 42 = 2\cdot 3\cdot 7$. Let us check that
The main point is that once we have seen one good prime, the test from
Corollary~\ref{cor:smallerisbad} gives us a sure way of distinguishing
good primes from bad ones.  However, without some ``outside information''
we cannot know whether the best prime seen so far is actually good;
it may be just ``less bad'' than other primes tried.
%makes it possible
% to identify these numbers as $\tau$-bad primes \textit{without computing $\Gtau$ over the rationals}.

\end{example}

%%%%%%%%%%%%%%%%%%%%%%%%%%%%
%  \begin{thebibliography}{99}%
%%%%%%%%%%%%%%%%%%%%%%%%%%%
%\bibliography{/Users/robbiano/Desktop/Desk/Robbiano}{}
\bibliographystyle{plain}

\end{document}